\documentclass[10pt,leqno]{amsart}
\topmargin= .5cm
\textheight=22.5cm
\textwidth= 34.30cc
\baselineskip=16pt
\usepackage{indentfirst, amssymb,amsmath,amsthm}
\evensidemargin= .7cm
\oddsidemargin= .7cm
\newtheorem*{theoA}{Theorem A}
\newtheorem*{theoB}{Theorem B}
\newtheorem*{theoC}{Theorem C}
\newtheorem*{theoD}{Theorem D}
\newtheorem*{theoE}{Theorem E}
\newtheorem*{theoF}{Theorem F}

\newtheorem{theo}{Theorem}[section]
\newtheorem{lem}{Lemma}[section]
\newtheorem{cor}{Corollary}[section]

\newtheorem{exm}{Example}[section]
\newtheorem{defi}{Definition}[section]
\newtheorem{ques}{Question}[section]
\newtheorem{rem}{Remark}[section]
\newcommand{\ol}{\overline}
\newcommand{\be}{\begin{equation}}
\newcommand{\ee}{\end{equation}}
\newcommand{\beas}{\begin{eqnarray*}}
\newcommand{\eeas}{\end{eqnarray*}}
\newcommand{\bea}{\begin{eqnarray}}
\newcommand{\eea}{\end{eqnarray}}
\newcommand{\lra}{\longrightarrow}
\numberwithin {equation}{section}
\numberwithin {lem}{section}
\numberwithin {theo}{section}
\numberwithin {defi}{section}
\numberwithin {rem}{section}
\numberwithin {cor}{section}
\renewcommand{\vline}{\mid}
\begin{document}
	\title[Uniqueness and two shared set problems of $\boldmath{L}$-Function ]{ Uniqueness and two shared set problems of $\boldmath{L}$-Function and certain class of meromorphic function}
		\date{}
	\author{Abhijit Banerjee\;\;\; and \;\;\;Arpita Kundu}
	\date{}
	\address{ Department of Mathematics, University of Kalyani, West Bengal 741235, India.}
	\email{abanerjee\_kal@yahoo.co.in, abanerjeekal@gmail.com}
	
	\address{Department of Mathematics, University of Kalyani, West Bengal 741235, India.}
	\email{arpitakundu.math.ku@gmail.com}
	\maketitle
	\let\thefootnote\relax
	\footnotetext{2010 Mathematics Subject Classification:  Primary 11M36; Secondary 30D35}
	\footnotetext{Key words and phrases: Meromorphic function, $L$ function, uniqueness, weighted sharing.}
	\footnotetext{Type set by \AmS -\LaTeX}
	\begin{abstract}
	Starting with a question of Yuan-Li-Yi [Value distribution of $L$-functions and uniqueness questions of F. Gross, Lithuanian Math. J., $\mathbf{58(2)}$(2018), 249-262] we have studied the uniqueness of a meromorphic function $f$ and an $L$-function $\mathcal{L}$ sharing two finite sets. At the time of execution of our work, we have pointed out a serious lacuna in the proof of a recent result of a of Sahoo-Halder [ Some results on L-functions related to sharing two finite sets, Comput. Methods Funct. Theo., $\mathbf {19}$(2019), 601-612] which makes most of the part of the Sahoo-Halder's paper under question. In context of our choice of sets, we have rectified Sahoo-Halder's result in a convenient manner.     
	\end{abstract}
\section{introduction}	
By a meromorphic function we shall always mean a meromorphic function in the
complex plane. We adopt the standard notations of Nevanilinna theory of meromorphic functions as explained in \cite{W.K.Hayman_64}.
Let   $\mathbb{\overline{C}}=\mathbb{C}\cup\{\infty\}$, $\mathbb{C^*}=\mathbb{C}\setminus\{0\}$ and $ \mathbb{\ol N}=\mathbb{N}\cup \{0 \}$, where $\mathbb{C}$ and $\mathbb{N}$  denote the set of all complex numbers and natural numbers respectively and by $\mathbb{Z}$ we denote the set of all integers. 
For any non-constant meromorphic function $h(z)$ we define 
$S(r,h)=o(T(r,h)), ( r\lra \infty, r\not\in E)$ where $E$ denotes any set of positive real numbers having finite linear measure.
\par \begin{defi}
	Let for a  non-constant meromorphic function $f$ and  $S\subset\overline{\mathbb{C}}$,  $E_{f}(S)=\bigcup_{a\in S}\{(z,p)\\\in\mathbb{C}\times\mathbb{N}: f(z)=a\; with\; multiplicity\; p\}$ $\left(\ol  E_{f}(S)=\bigcup_{a\in S}\{(z,1)\in\mathbb{C}\times\mathbb{N}: f(z)=a\}\right) $. Then we say  $f$, $g$ share the set $S$ Counting Multiplicities (CM)(Ignoring Multiplicities (IM)) if $E_{f}(S)=E_{g}(S)$ $\left(\ol  E_{f}(S)=\ol E_{g}(S)\right) $. 
\end{defi} 
When $S$ contains only one element the definition coincides with the classical definition of value sharing.

 This paper deals with the uniqueness problems of set sharing related to $L$-functions and an arbitrary meromorphic function in $\mathbb{C}$.
 In 1989, Selberg \cite{selberg-92} found new class of Dirichlet series, called as Selberg class, which in course of time made a significant impact on the realm of research in analytic number theory. Throughout  this paper an $L$-function means actually a Selberg class function with the Riemann zeta function
 as the prototype. The
Selberg class $\mathcal{S}$ of $L$-functions is the set of all Dirichlet series $\mathcal{L}(s)=\sum_{n=1}^{\infty}
a(n)n^{-s}$ of a complex variable
$s$ that satisfy the following axioms (see \cite{selberg-92}):
\\$\boldsymbol{(i)}$ Ramanujan hypothesis: $a(n) \ll n^{\epsilon}$ for every $\epsilon > 0$.
\\$\boldsymbol{(ii)}$ Analytic continuation: There is a nonnegative integer $k$ such that $(s- 1)^{k}\mathcal{L}(s)$ is an entire function of finite order.
\\$\boldsymbol{(iii)}$ Functional equation: $\mathcal{L}$ satisfies a functional equation of type $$\Lambda _{\mathcal{L}}(s) = \omega\ol {\Lambda _{\mathcal{L}}(1 - \ol s)},$$ where $$\Lambda _{\mathcal{L}}(s)=\mathcal{L}(s)Q^{s}\prod_{j=1}^{K}\Gamma(\lambda _{j}s + \nu_{j})$$ with positive real numbers Q, $\lambda_{j}$ and complex numbers $\nu_{j} , \omega$  with 
$ Re \nu_{j}\geq 0$ and $|\omega|=1$.
\\$\boldsymbol{(iv)}$ Euler product hypothesis : $\mathcal{L}$ can be written over prime as $$\mathcal{L}(s)=\prod_{p}\exp\left(\sum_{k=1}^{\infty}b(p^{k})/p^{ks}\right)$$ with suitable coefficients $b(p^{k})$ satisfying $b(p^{k})\ll p^{k\theta}$ for some $\theta<1/2$ where the product is taken over all prime numbers $p$.
\par The Ramanujan hypothesis implies that the Dirichlet series $\mathcal{L}$ converges absolutely in the half-plane $Re( s) > 1$ and then is extended meromorphically. The degree $d_{\mathcal{L}}$ of an $L$-function $\mathcal{L}$ is defined to be \beas d_{\mathcal{L}}=2\sum_{j=1}^{K}\lambda_{j},\eeas

\par where $\lambda_{j}$ and $K$ respectively be the positive real number and the positive integer as in axiom (iii) above.
For the last few years, the researchers  have found an increasing interest on the value distributions of $L$-functions. Readers can make a glance over the references (\cite{Graun-Grahl-Steuding}, \cite{B.Q.LI-Proc.Am-10}, \cite{Li-Yi_Nachr}, \cite{Steuding-Sprin-07}). Like meromorphic function, the value distribution of an $L$-function $\mathcal{L}$ is actually the scattering of the roots of the equation $\mathcal{L}(s)=c$ for some $c\in \mathbb{C}\cup\{\infty\}$. By the sharing of sets by an $L$-function, we mean the same notion as mentioned in the first and second paragraph of this paper where all the definitions discussed also applicable to an $L$-function. In 2007, Steuding [p. 152, \cite{Steuding-Sprin-07} ] first studied the uniqueness problem of two $\mathcal{L}$ functions and obtained a remarkable result in connection to Nevanlinna $5$ point uniqueness theorem for meromorphic function. In \cite{Steuding-Sprin-07} it was shown that only one share value is enough to determine an $\mathcal{L}$ function under certain hypothesis. The result was as follows:
\begin{theoA}\cite{Steuding-Sprin-07}
	If two $L$-functions $\mathcal{L}_{1}$ and $\mathcal{L}_{2}$ with $a(1)=1$ share a complex value $c\;(\not=\infty)$ CM, then $\mathcal{L}_{1}=\mathcal{L}_{2}$.\end{theoA}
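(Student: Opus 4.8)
To establish Theorem A the plan is to exploit three properties of Selberg-class $L$-functions that a general meromorphic function need not have: the only possible pole is at $s=1$ (because $(s-1)^{k}\mathcal{L}(s)$ is entire for some $k$ by axiom (ii)); $\mathcal{L}$ has finite order (again by (ii)); and $\mathcal{L}(\sigma)\to a(1)=1$ as $\sigma\to+\infty$ along the real axis (from the absolute convergence of $\sum a(n)n^{-s}$ in $\mathrm{Re}\,s>1$). First I would introduce $H=\dfrac{\mathcal{L}_{1}-c}{\mathcal{L}_{2}-c}$. Because $\mathcal{L}_{1}$ and $\mathcal{L}_{2}$ share $c$ CM, the zeros of the numerator and of the denominator cancel exactly, so $H$ is meromorphic on $\mathbb{C}$ with its only possible zero or pole at $s=1$; hence $H(s)=(s-1)^{n}e^{p(s)}$ for some $n\in\mathbb{Z}$ and some entire $p$, and finiteness of order forces $p$ to be a polynomial. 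The whole task is then to show $n=0$ and $e^{p}\equiv1$, which gives $\mathcal{L}_{1}=\mathcal{L}_{2}$.

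Assume first $c\neq1$. Since $\sum|a_{i}(n)|n^{-\mathrm{Re}\,s}<\infty$ for $\mathrm{Re}\,s\ge2$, the functions $\mathcal{L}_{i}$ are bounded there and tend to $1$ uniformly in $\mathrm{Im}\,s$, so we may fix $\sigma_{0}$ with $|\mathcal{L}_{2}(s)-c|\ge|1-c|/2$ on $\mathrm{Re}\,s\ge\sigma_{0}$; hence $|H(s)|=|(s-1)^{n}e^{p(s)}|$ is bounded on that half-plane. A polynomial $p$ of degree $\ge1$ cannot keep $\mathrm{Re}\,p$ bounded above on a half-plane: writing $s=\rho e^{i\theta}$ with $\theta\in(-\pi/2,\pi/2)$, the argument of the leading monomial of $p$ runs through an interval of length at least $\pi$, so $\mathrm{Re}\,p(s)\to+\infty$ along some ray. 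Thus $p$ is constant; letting $|s|\to\infty$ in the bound then gives $n\le0$, and applying the same argument to $1/H$ gives $n\ge0$. Hence $n=0$, $H\equiv e^{p}$ is constant, and $e^{p}=\lim_{\sigma\to\infty}H(\sigma)=\frac{1-c}{1-c}=1$, so $\mathcal{L}_{1}=\mathcal{L}_{2}$.

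The residual case $c=1$ is the main obstacle, because then $\mathcal{L}_{2}-1\to0$ on the right and the crude boundedness of $H$ breaks down. Here I would bring in the leading Dirichlet term: with $m_{i}=\min\{n\ge2:a_{i}(n)\neq0\}$ one gets $|\mathcal{L}_{i}(s)-1|\asymp m_{i}^{-\mathrm{Re}\,s}$ uniformly in $\mathrm{Im}\,s$ on a far-right half-plane, so $|(s-1)^{n}e^{p(s)}(m_{1}/m_{2})^{s}|$ is bounded above and below there; the argument of the preceding paragraph then forces $n=0$ and $p(s)+s\log(m_{1}/m_{2})$ constant, i.e.\ $\mathcal{L}_{1}-1=\kappa\,(m_{2}/m_{1})^{s}(\mathcal{L}_{2}-1)$ for a constant $\kappa$. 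Writing the right-hand side as a generalized Dirichlet series and comparing exponents with $\mathcal{L}_{1}-1$ shows that $m_{2}\mid n\,m_{1}$ for every index $n$ in the support of $\mathcal{L}_{2}$, and symmetrically; since the support of a nonconstant Selberg-class function cannot be contained in the powers of a single prime (which the axioms rule out), this forces $m_{1}=m_{2}$, so $\mathcal{L}_{1}-1=\lambda(\mathcal{L}_{2}-1)$, i.e.\ $a_{1}(n)=\lambda a_{2}(n)$ for all $n\ge2$. Finally, multiplicativity of the coefficients (from the Euler product axiom (iv)), applied to coprime $u,v\ge2$ with $a_{2}(uv)\neq0$, gives $\lambda a_{2}(uv)=a_{1}(uv)=a_{1}(u)a_{1}(v)=\lambda^{2}a_{2}(uv)$, hence $\lambda=1$ and $\mathcal{L}_{1}=\mathcal{L}_{2}$. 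I expect the genuinely delicate points to be the half-plane growth estimate that pins down the exponential factor, together with the structural facts about the support of a Selberg-class function used in the case $c=1$.
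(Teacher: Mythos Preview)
The paper gives no proof of Theorem~A; it is quoted from \cite{Steuding-Sprin-07} as background, and the very next sentence records that Hu--Li \cite{Hu_LI_Can-16} found a counterexample showing the statement is \emph{false} when $c=1$. So there is nothing in the paper to compare against, and your treatment of the case $c=1$ necessarily contains an error. For $c\neq 1$ your approach is the standard one and the conclusion is correct, with one minor misstatement: the claim that $\mathrm{Re}\,p$ cannot stay bounded above on a right half-plane whenever $\deg p\ge 1$ fails for $p(s)=-s$ (your ray argument sweeps an angular interval of length exactly $\pi$ when $\deg p=1$, and that interval can sit entirely where the cosine is nonpositive). The easy fix is to use the boundedness of both $H$ and $1/H$ from the outset, which makes $n\log|s-1|+\mathrm{Re}\,p(s)$ two-sidedly bounded on the half-plane; growth along a vertical line then forces $n=0$ first, after which the two-sided bound on $\mathrm{Re}\,p$ gives $\deg p=0$.

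For $c=1$ the genuine gap is the parenthetical assertion that ``the support of a nonconstant Selberg-class function cannot be contained in the powers of a single prime (which the axioms rule out)''. Axioms (i)--(iv) do not rule this out; excluding such behaviour would require deep classification results for $\mathcal{S}$, not the bare definition. Moreover the divisibility condition $m_2\mid nm_1$ that you actually derive says only that every $n$ in the support of $\mathcal{L}_2$ is a multiple of $m_2/\gcd(m_1,m_2)$, which is not even the prime-power condition you then invoke, so the passage to $m_1=m_2$ is doubly unsupported; and the final multiplicativity step again needs coprime $u,v\ge 2$ with $a_2(uv)\neq 0$, i.e.\ precisely the same unproved structural claim. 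Since a counterexample exists at $c=1$, no repair is possible under the stated hypotheses; the correct version of the theorem excludes $c=1$ as well as $c=\infty$.
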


	Hu-Li \cite{Hu_LI_Can-16} found a counterexample to show that {\it{Theorem A}} is not true when $c = 1$. 

Since $L$-functions possess meromorphic continuations, researchers presumed that there might be an intimate relationship between $L$-function and arbitrary meromorphic function under sharing of values. In 2010, Li \cite{B.Q.LI-Proc.Am-10} exhibited the following example to show that {\it{Theorem A}} cease to hold for an $L$-function and a meromorphic function.
\begin{exm}
	For an entire function g, the functions $\zeta$ and $\zeta e^{g}$ share $0$ CM, but $\zeta \not=\zeta e^{g}$.
\end{exm}
However, corresponding to two distinct complex values, Li \cite{B.Q.LI-Proc.Am-10} was able to obtain the following uniqueness result.
\begin{theoB}\cite{B.Q.LI-Proc.Am-10}
	Let $f$ be a meromorphic function in $\mathbb{C}$ having finitely many poles and let a and b be any two distinct finite complex values. If $f$ and a non constant $L$-function $\mathcal{L}$ share $a$ CM and $b$ IM, then $f=\mathcal{L}$.
\end{theoB}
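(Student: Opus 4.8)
The plan is to study the quotient $g:=(f-a)/(\mathcal{L}-a)$ and to prove that $g\equiv 1$, which is exactly $f=\mathcal{L}$. The non-elementary facts about $\mathcal{L}$ that I would invoke throughout are: (a) $\mathcal{L}$ has at most one pole, at $s=1$ (axiom (ii)), so $\ol N(r,\infty;\mathcal{L})=O(\log r)$; (b) $\mathcal{L}$ is of order one and $T(r,\mathcal{L})\asymp r\log r$, so in particular $T(r,\mathcal{L})\neq O(r)$ and $\ol N(r,\infty;\mathcal{L})=S(r,\mathcal{L})$; and (c) $\mathcal{L}$ takes every finite value infinitely often, with $\ol N(r,c;\mathcal{L})\neq S(r,\mathcal{L})$ (indeed $\ol N(r,c;\mathcal{L})\asymp T(r,\mathcal{L})$) for every $c\in\mathbb{C}$. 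These are classical in the value-distribution theory of $L$-functions (see e.g.\ \cite{Steuding-Sprin-07}); fact (c) is the crucial rigidity that an arbitrary meromorphic function need not possess.

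First I would bound $f$. Since $f$ has finitely many poles, $\ol N(r,\infty;f)=O(\log r)$; since $f$ and $\mathcal{L}$ share $a$ CM and $b$ IM, $\ol N(r,a;f)=\ol N(r,a;\mathcal{L})$ and $\ol N(r,b;f)=\ol N(r,b;\mathcal{L})$, each $\leq T(r,\mathcal{L})+O(1)$. The Second Main Theorem applied to $f$ with target set $\{a,b,\infty\}$ then gives $T(r,f)\leq 2\,T(r,\mathcal{L})+S(r,f)+O(\log r)$; absorbing $S(r,f)=o(T(r,f))$ and removing the finite-measure exceptional set by the monotonicity of $T(r,f)$ yields $T(r,f)=O(T(r,\mathcal{L}))=O(r\log r)$, so $f$ has order at most $1$. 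Because $f-a$ and $\mathcal{L}-a$ have precisely the same zeros with the same multiplicities (CM-sharing of $a$), the function $g$ has neither zeros nor poles except among the finitely many poles of $f$ and of $\mathcal{L}$, so $g=R(s)e^{\gamma(s)}$ with $R$ rational and $\gamma$ entire. As $g$ is a quotient of two functions of order $\leq 1$ it has order $\leq 1$, whence $e^{\gamma}$ has finite order; this forces $\gamma$ to be a polynomial whose degree equals the order of $e^{\gamma}$, hence $\deg\gamma\leq 1$. Thus $g=\tilde R(s)e^{cs}$ for some rational $\tilde R$ and constant $c$, and a routine estimate gives $T(r,g)=(|c|/\pi)\,r+O(\log r)=O(r)$.

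It then remains to show $g\equiv 1$. At any $b$-point $s_{0}$ of $\mathcal{L}$ one has $f(s_{0})=b$ (IM-sharing) and $\mathcal{L}(s_{0})=b\neq a$, hence $g(s_{0})=(b-a)/(b-a)=1$; so every $b$-point of $\mathcal{L}$ is a zero of $g-1$. Therefore $\ol N(r,b;\mathcal{L})\leq\ol N(r,0;g-1)\leq T(r,g-1)+O(1)=T(r,g)+O(1)=O(r)$, and since $T(r,\mathcal{L})\neq O(r)$ this says $\ol N(r,b;\mathcal{L})=S(r,\mathcal{L})$. By fact (c) this is impossible unless $g-1\equiv 0$, so $g\equiv 1$ and $f=\mathcal{L}$. (The argument is uniform in $c$: when $c=0$, $g-1=\tilde R-1$ is rational with only $O(\log r)$ zeros, so $\ol N(r,b;\mathcal{L})=S(r,\mathcal{L})$ still follows unless $\tilde R\equiv 1$.)

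I expect the main obstacle to be not the Nevanlinna bookkeeping but securing facts (b) and (c) about $\mathcal{L}$: the exact growth $T(r,\mathcal{L})\asymp r\log r$, which is what makes a counting function of size $O(r)$ genuinely negligible, and—above all—the absence of an almost-omitted finite value, $\ol N(r,c;\mathcal{L})\neq S(r,\mathcal{L})$, which is exactly the property a general meromorphic function may fail (e.g.\ $e^{z}$ omits $0$) and which is what ultimately rigidifies $g$ to $1$. A secondary technical point deserving care is the order reduction for $f$, needed so that $\gamma$ is a genuine polynomial of degree at most $1$ and $g$ hence carries only an $O(r)$ amount of characteristic; this in turn rests on $T(r,\mathcal{L})$ dominating the error term $S(r,f)$ once the exceptional set has been removed.
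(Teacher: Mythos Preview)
The paper does not give its own proof of {\it Theorem B}: the statement is quoted from \cite{B.Q.LI-Proc.Am-10} as a background result and is not reproved here, so there is nothing in the present paper to compare your argument against line by line.

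That said, your proposal is essentially the standard proof (and, as far as I can tell, the one in Li's original paper). Forming $g=(f-a)/(\mathcal{L}-a)$, using CM-sharing plus the finitely-many-poles hypothesis to force $g=R(s)e^{cs}$ with $R$ rational and $c$ constant, and then using the $b$-points to collapse $g$ to $1$ is exactly the route. The ingredients you flag as non-elementary are the right ones, and both appear (for other purposes) in the present paper: your fact~(c) is precisely {\it Lemma~\ref{l2.12}}, and the order-one statement in your fact~(b) is the content of {\it Lemma~\ref{l2.11}} in a closely related setting. One small point worth tightening: for the final contradiction you need $\ol N(r,b;\mathcal{L})\neq O(r)$, not merely $\ol N(r,b;\mathcal{L})\neq S(r,\mathcal{L})$; you do state the stronger asymptotic $\ol N(r,b;\mathcal{L})\asymp T(r,\mathcal{L})\asymp r\log r$, which is indeed what is available from Steuding's value-distribution results, so the argument goes through. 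Your handling of the order reduction for $f$ via the Second Main Theorem and the subsequent bound $\deg\gamma\leq 1$ is also clean.
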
  
	For three IM shared values, Li-Yi \cite{Li-Yi_Nachr} obtained the following
theorem.
\begin{theoC}\cite{Li-Yi_Nachr} Let f be a transcendental meromorphic function in C having finitely
	many poles in $\mathbb{C}$, and let $b_{1},\;b_{2},\; b_{3}$ be three distinct finite complex values. If $f$ and a non-constant $L$-function $\mathcal{L}$ shares
	$b_{1},\;b_{2},\; b_{3}$  IM,  then $\mathcal{L}\equiv f$ .
\end{theoC}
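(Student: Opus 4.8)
The plan is to run Nevanlinna's second fundamental theorem against two structural facts about a non-constant $L$-function: it has at most one pole (at $s=1$), and it is classical (see, e.g., \cite{Steuding-Sprin-07}) that $T(r,\mathcal L)\sim\frac{d_{\mathcal L}}{\pi}\,r\log r$, so $\mathcal L$ has order $1$ and $T(r,\mathcal L)/r\to\infty$. For the growth comparison, note first that since $f$ has finitely many poles and $\mathcal L$ at most one, $\ol{N}(r,\infty;f)+\ol{N}(r,\infty;\mathcal L)=O(\log r)$, and that $f$ must be transcendental — otherwise the inequality just below would make $\mathcal L$ rational — so $O(\log r)=S(r,f)=S(r,\mathcal L)$. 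Applying the second fundamental theorem to $f$ and to $\mathcal L$, each against the four targets $b_1,b_2,b_3,\infty$, discarding the $\infty$-terms, and using the IM-sharing identities $\ol{N}(r,b_j;f)=\ol{N}(r,b_j;\mathcal L)$ $(j=1,2,3)$, one obtains $T(r,f)+T(r,\mathcal L)\le\sum_{j=1}^{3}\ol{N}(r,b_j;f)+S(r)$; since each $\ol{N}(r,b_j;f)\le\min\{T(r,f),T(r,\mathcal L)\}+O(1)$, this forces $T(r,f)\asymp T(r,\mathcal L)$ (so $f$ has order $1$ too, and $S(r):=S(r,f)=S(r,\mathcal L)$ is unambiguous) and, from the ramification-sensitive form of the theorem, that all ramification and multiplicity-discrepancy counting functions are $S(r)$.

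Next I would upgrade the sharing. For $j=1,2,3$ set $\Phi_j=\frac{f'}{f-b_j}-\frac{\mathcal L'}{\mathcal L-b_j}$; the lemma on the logarithmic derivative gives $m(r,\Phi_j)=S(r)$, while the poles of $\Phi_j$ are simple and lie only over the poles of $f$ and of $\mathcal L$ and over the common $b_j$-points at which $f$ and $\mathcal L$ have unequal multiplicity, so $T(r,\Phi_j)\le\ol{N}_*(r,b_j)+S(r)$, with $\ol{N}_*(r,b_j)$ counting the latter points. Running the counting argument that underlies the four-value theorems of Nevanlinna and Gundersen, with $\infty$ in the role of a fourth value — attained only $S(r)$-often by each function — one deduces $\ol{N}_*(r,b_j)=S(r)$ for each $j$: that is, $f$ and $\mathcal L$ share $b_1,b_2,b_3$ CM up to an $S(r)$-error. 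Hence $\frac{f-b_j}{\mathcal L-b_j}=\Pi_j(z)\,e^{a_jz}$, where $a_j\in\mathbb C$ and $\Pi_j$ is a small meromorphic function (a rational factor times a canonical product over the sparse unequal-multiplicity set); here finiteness of the order of $f$ is what reduces the a priori entire exponent to a polynomial of degree $\le1$.

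To finish, suppose $f\not\equiv\mathcal L$. From $\frac{f-b_1}{\mathcal L-b_1}-\frac{f-b_i}{\mathcal L-b_i}=\frac{(b_1-b_i)(f-\mathcal L)}{(\mathcal L-b_1)(\mathcal L-b_i)}$ for $i=2,3$, dividing the case $i=2$ by the case $i=3$ yields $\frac{(b_1-b_2)(\mathcal L-b_3)}{(b_1-b_3)(\mathcal L-b_2)}=\frac{\Pi_1e^{a_1z}-\Pi_2e^{a_2z}}{\Pi_1e^{a_1z}-\Pi_3e^{a_3z}}$ (if some numerator here vanishes identically we already get $f\equiv\mathcal L$). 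The left side is a non-constant Möbius image of $\mathcal L$, so has characteristic $T(r,\mathcal L)+O(1)$; the right side has characteristic $O(r)+S(r)$ when $a_1,a_2,a_3$ are not all equal, and $S(r)$ when they are. In either case $T(r,\mathcal L)=O(r)$, contradicting $T(r,\mathcal L)/r\to\infty$; hence $f\equiv\mathcal L$. (Equivalently, the only escape is that $f$ is a non-identity Möbius transformation of $\mathcal L$, which would force $\mathcal L$ to omit a finite value $a$; but a non-constant $L$-function omits no finite value, since then $\mathcal L-a$, having no zeros and at most one pole, would equal $e^{\alpha s+\beta}/(s-1)^{k}$, incompatible with $\mathcal L(s)\to1$ and $\mathcal L(s)-1\asymp a(n_0)n_0^{-s}$ as $\mathrm{Re}\,s\to+\infty$ unless $\mathcal L$ is constant.)

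The main obstacle is the upgrading step. One does not literally possess a fourth shared value, so the Gundersen-type bookkeeping of the unequal-multiplicity counting functions must be carried out by hand — the fact that $\infty$ is attained $S(r)$-often by both functions makes this much easier than the general four-IM-value problem, but it is still the technical heart — and one must verify that the canonical products produced there are genuinely $S(r)$-small, so that the exponential factors $e^{a_jz}$ come out clean. Once this is in place, the concluding argument is essentially routine.
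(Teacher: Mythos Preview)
The paper does not prove Theorem~C at all; it is quoted verbatim from \cite{Li-Yi_Nachr} as background and motivation, with no argument supplied. There is therefore nothing in this paper to compare your attempt against.

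On the attempt itself: the overall architecture is the standard one and is correct in outline --- align $T(r,f)$ with $T(r,\mathcal L)$ via the second fundamental theorem using $b_1,b_2,b_3,\infty$; upgrade the three IM-sharings to ``CM up to $S(r)$'' via the logarithmic-derivative auxiliaries $\Phi_j$; use order~$1$ to write each ratio $(f-b_j)/(\mathcal L-b_j)$ as (small)$\cdot e^{a_jz}$; and then contradict $T(r,\mathcal L)\sim\frac{d_{\mathcal L}}{\pi}r\log r$. But you explicitly flag the two real difficulties without resolving them. First, the bound $\ol N_*(r,b_j)=S(r)$ is only asserted (``must be carried out by hand''); the Gundersen-style bookkeeping that produces it from three finite shared values plus an $\infty$ contributing only $O(\log r)$ is precisely the substance of the theorem, and without it the rest does not start. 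Second, even granting $\ol N_*(r,b_j)=S(r)$, it does not follow automatically that the canonical product $\Pi_j$ built over the discrepancy set is a small function: a set with small \emph{reduced} counting function can still carry large multiplicities, and one needs the full ramification accounting from the second fundamental theorem (i.e.\ that $N(r,b_j;f\mid\geq2)+N(r,b_j;\mathcal L\mid\geq2)=S(r)$ as well) before the factorization $(f-b_j)/(\mathcal L-b_j)=\Pi_j e^{a_jz}$ with $T(r,\Pi_j)=S(r)$ is justified. So what you have is a credible plan, not a proof.
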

Inspired by the famous question of Gross \cite{G.F_Springer(1977)}, regarding uniqueness and sharing of sets, a lot of investigations were performed by many researchers. So the analogous question for the uniqueness of meromorphic function $f$ and an $L$-function $\mathcal{L}$ needs further attention. In this regard, Yuan-Li-Yi \cite{Yan-Li-Yi_Lith} proposed the following question: \begin{ques}
	What can be said about the relationship
	between a meromorphic function $f$ and an $L$-function $\mathcal{L}$ if $f$ and $\mathcal{L}$ share one or two finite sets?
\end{ques}   \par
In response to their own question Yuan-Li-Yi \cite{Yan-Li-Yi_Lith} proved the following uniqueness result.
\begin{theoD}\cite{Yan-Li-Yi_Lith}
	Let $S=\{a_{1},a_{2},\ldots,a_{l}\}$, where ${a_{1},a_{2},\ldots,a_{l}}$ are all distinct roots of the algebraic equation
	$w^{n} + aw^{m} + b = 0$. Here $l$ is a positive integer satisfying $1\leq l\leq n$, $n$ and $m$ are relatively prime positive integers with $n \geq 5$ and $n > m$, and $a$, $b$, $c$ are three nonzero finite constants, where $c \not= \alpha_{j}$ for $1 \leq j \leq l$. Let
	$f$ be a meromorphic function having finitely many poles in $\mathbb{C}$, and let ${\mathcal{L} } $ be a non constant $L$-function. If $f$ and $\mathcal{L}$ share $S$ CM and $c$ IM, then $f\equiv  \mathcal{L}$.
\end{theoD}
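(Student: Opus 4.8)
The plan is to convert the set–sharing hypothesis into a polynomial identity and then to use the rigidity of $\mathcal L$ to collapse it. Write $P(w)=w^{n}+aw^{m}+b$, so that $S$ is precisely the zero set of $P$ (we may assume $f$ is non‑constant and $l\ge 2$; the case $l=1$ is \emph{Theorem B}). From $E_{f}(S)=E_{\mathcal L}(S)$ one reads off that $P(f)$ and $P(\mathcal L)$ vanish at the same points with the same multiplicities — immediate when $P$ has only simple roots ($l=n$), and in general arranged by tracking the common multiplicity pattern — so $P(f)$ and $P(\mathcal L)$ share $0$ CM. Since $f$ has only finitely many poles and $\mathcal L$ has at most the single pole at $s=1$, the quotient $\psi:=P(f)/P(\mathcal L)$ has no zeros and only finitely many poles; hence $\psi=Re^{h}$ with $R$ a nonzero rational function and $h$ entire, and $P(f)\equiv Re^{h}\,P(\mathcal L)$.

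The first quantitative step is to control $h$. Applying the Second Fundamental Theorem to $f$ with the $l+2$ targets $\infty,c,a_{1},\dots,a_{l}$, using $\overline N(r,f)=O(\log r)$, the estimate $\sum_{i=1}^{l}\overline N\!\big(r,\tfrac1{f-a_{i}}\big)=\overline N\!\big(r,\tfrac1{P(f)}\big)=\overline N\!\big(r,\tfrac1{P(\mathcal L)}\big)\le nT(r,\mathcal L)+O(1)$ (from the CM set–sharing) and $\overline N\!\big(r,\tfrac1{f-c}\big)=\overline N\!\big(r,\tfrac1{\mathcal L-c}\big)\le T(r,\mathcal L)+O(1)$ (from the IM sharing of $c$), one obtains $l\,T(r,f)\le(n+1)T(r,\mathcal L)+S(r,f)+O(\log r)$. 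Combining this with the classical growth $T(r,\mathcal L)=\tfrac{d_{\mathcal L}}{\pi}\,r\log r+O(r)$ and with the reverse estimate produced by applying the Second Fundamental Theorem to $\mathcal L$ at $\infty,a_{1},\dots,a_{l}$ (again exploiting that $\mathcal L$ is almost pole‑free), one gets $T(r,f)\asymp T(r,\mathcal L)\asymp r\log r$. In particular $f$ has order at most one, so $h$ is a polynomial of degree at most one and $\psi=Re^{h}$ is a small function: $T(r,\psi)=O(r)=S(r,f)=S(r,\mathcal L)$.

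The crucial step — and the one I expect to be the \emph{main obstacle} — is to upgrade $P(f)\equiv\psi P(\mathcal L)$ to $P(f)\equiv P(\mathcal L)$, i.e.\ $\psi\equiv1$. Since $c$ is not a root of $P$, $P(c)\neq0$, and at every common $c$‑point of $f$ and $\mathcal L$ the identity $P(c)=\psi\cdot P(c)$ forces $\psi=1$; thus every such point is a zero of $\psi-1$. If $\psi\not\equiv1$ then $N\!\big(r,\tfrac1{\psi-1}\big)\le T(r,\psi)+O(1)=S(r,\mathcal L)$, and since $f,\mathcal L$ share $c$ IM this yields $\overline N\!\big(r,\tfrac1{\mathcal L-c}\big)=\overline N\!\big(r,\tfrac1{f-c}\big)\le N\!\big(r,\tfrac1{\psi-1}\big)=S(r,\mathcal L)$. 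This contradicts the value distribution of Selberg‑class $L$‑functions, namely that $\mathcal L$ admits no finite deficient (indeed no finite ramified) value, so $\overline N\!\big(r,\tfrac1{\mathcal L-\alpha}\big)=T(r,\mathcal L)+S(r,\mathcal L)$ for every $\alpha\in\mathbb C$. Hence $\psi\equiv1$ and $f^{n}+af^{m}\equiv\mathcal L^{n}+a\mathcal L^{m}$. The delicate points here are keeping $T(r,\psi)$ inside $S(r,\mathcal L)$ in the previous step and invoking the $a$‑point distribution of $\mathcal L$ in its correct (ramification) form.

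It remains to deduce $f\equiv\mathcal L$ from $f^{n}+af^{m}\equiv\mathcal L^{n}+a\mathcal L^{m}$, and this is the step that uses $n\ge5$ together with $\gcd(n,m)=1$ and $n>m$. Suppose $f\not\equiv\mathcal L$ and put $u=f/\mathcal L$. If $u$ were a constant $t$, then $(t^{n}-1)\mathcal L^{\,n-m}=a(1-t^{m})$; since $\mathcal L$ is non‑constant this forces $t^{n}=t^{m}=1$, hence $t^{\gcd(n,m)}=t=1$, contrary to $f\not\equiv\mathcal L$. So $u$ is non‑constant. Dividing $f^{n}-\mathcal L^{n}=-a(f^{m}-\mathcal L^{m})$ by $\mathcal L^{n}$ and cancelling the common factor $u-1$ yields $\mathcal L^{\,n-m}A(u)=-a\,B(u)$, where $A(u)=(u^{n}-1)/(u-1)$ and $B(u)=(u^{m}-1)/(u-1)$ are coprime with $\deg A=n-1\ge4$. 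Each zero of $A(u)$ must be a pole of $\mathcal L^{\,n-m}$, so — as $\mathcal L$ has at most one pole — $u$ takes each of the $n-1$ roots of $A$ at most once, giving $\overline N\!\big(r,\tfrac1{u-\rho}\big)=O(\log r)$ for every root $\rho$ of $A$. The Second Fundamental Theorem applied to $u$ at these $\ge4$ values then gives $(n-3)T(r,u)=O(\log r)+S(r,u)$, so $u$ is rational; but then $\mathcal L^{\,n-m}=-aB(u)/A(u)$ is rational, forcing $\mathcal L$ to be rational, which is impossible for an $L$‑function. Therefore $f\equiv\mathcal L$, completing the proof.
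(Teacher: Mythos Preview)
This statement is \emph{Theorem D}, which the paper quotes from Yuan--Li--Yi \cite{Yan-Li-Yi_Lith} as background; the present paper gives no proof of it. So there is no ``paper's own proof'' to compare your attempt against, and your write-up should be read as an independent proof sketch rather than a reconstruction.

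That said, there is a genuine gap in your argument when $l<n$, i.e.\ when $P(w)=w^{n}+aw^{m}+b$ has multiple roots. You assert that $P(f)$ and $P(\mathcal L)$ share $0$ CM, dismissing the non-simple case with ``arranged by tracking the common multiplicity pattern''. But CM sharing of the \emph{set} $S=\{a_1,\dots,a_l\}$ only guarantees that at a point $z_0$ where $f=a_i$ with order $p$, one has $\mathcal L=a_j$ with the \emph{same} order $p$ for some $j$; it does not force $i=j$. If $a_i$ and $a_j$ have different multiplicities $\mu_i\neq\mu_j$ as roots of $P$, then $P(f)$ and $P(\mathcal L)$ vanish at $z_0$ to orders $\mu_i p$ and $\mu_j p$, which differ. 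Hence $\psi=P(f)/P(\mathcal L)$ can have infinitely many zeros or poles coming from such points, and your factorisation $\psi=Re^{h}$ with $R$ rational collapses. This is precisely the kind of ``$P(f)$ and $P(\mathcal L)$ share \dots'' shortcut that the present paper's \emph{Remark 1.1} warns against in another context. A clean fix is to work instead with $\widetilde P(w)=\prod_{i=1}^{l}(w-a_i)$, for which $\widetilde P(f)$ and $\widetilde P(\mathcal L)$ genuinely share $0$ CM; your $\psi$-argument then goes through verbatim and yields $\widetilde P(f)\equiv\widetilde P(\mathcal L)$. However, your final algebraic step (the $u=f/\mathcal L$ argument exploiting the explicit shape $f^{n}+af^{m}=\mathcal L^{n}+a\mathcal L^{m}$) no longer applies to the identity $\widetilde P(f)=\widetilde P(\mathcal L)$, so that step needs a separate treatment in the $l<n$ case.

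For $l=n$ (all roots simple) your outline is essentially sound: the growth comparison, the $\psi\equiv1$ step via the $c$-points and \emph{Lemma \ref{l2.12}}, and the concluding $u=f/\mathcal L$ argument using $n\ge5$, $\gcd(n,m)=1$ all check out.
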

In the mean time, considering the sharing of two finite sets Lin-Lin \cite{Lin-Lin-filomat} proved the following theorem.
	\begin{theoE}\cite{Lin-Lin-filomat}
	Let $f$ be a meromorphic function in $\mathbb{C}$ with finitely many poles, $S_{1}, S_{2}\subset \mathbb{C}$ be two distinct sets such that $S_{1}\cap  S_{2}=\phi$ and $\#(S_{i} ) \leq 2$, $i = 1$, $2$, where $\#(S)$ denotes
	the cardinality of the set $S$. Suppose for a finite set $S = \{a_{i} \mid i = 1, 2,\ldots, n\}$, $C(S)$	is defined by $C(S) = \frac{1}{n}\sum_{i=1}^{n}a_{i}$. If $f$ and a non-constant $L$-function $\mathcal{L}$ share $S_{1}$ CM
	and $S_{2}$ IM, then (i) $\mathcal{L} = f$ when $C(S_{1}) \not= C(S_{2})$ and (ii) $\mathcal{L} = f$ or $\mathcal{L}+ f = 2C(S_{1})$	when $C(S_{1}) = C(S_{2})$.
\end{theoE}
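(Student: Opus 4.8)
The plan is to convert the CM-sharing of $S_1$ into a single exponential relation and then force that relation to collapse to $P(f)\equiv P(\mathcal{L})$, where $P$ is the monic polynomial with zero set $S_1$; the conclusion then drops out by factoring. I treat the generic case $\#(S_1)=\#(S_2)=2$, writing $S_1=\{a_1,a_2\}$ and $S_2=\{b_1,b_2\}$ (the four values distinct), $P(w)=(w-a_1)(w-a_2)=w^2-2C(S_1)w+a_1a_2$ and $Q(w)=(w-b_1)(w-b_2)=w^2-2C(S_2)w+b_1b_2$; the subcases with some $\#(S_i)=1$ run identically with a linear factor in place of a quadratic (when both sets are singletons one just invokes Theorem B, and then automatically $C(S_1)\neq C(S_2)$).

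Since $f$ has finitely many poles and $\mathcal{L}$ at most the pole at $s=1$, the CM-sharing of $S_1$ gives $P(f)/P(\mathcal{L})=Re^{\gamma}$ with $R\neq 0$ rational and $\gamma$ entire. Feeding the Second Main Theorem for $\mathcal{L}$ at $a_1,a_2,\infty$ together with $\overline N(r,1/P(\mathcal{L}))=\overline N(r,1/P(f))\le 2T(r,f)+O(1)$ yields $T(r,\mathcal{L})\le 2T(r,f)+S(r,\mathcal{L})$, and the symmetric argument for $f$ (its poles being negligible) gives the reverse, so $T(r,f)\asymp T(r,\mathcal{L})$. Hence $f$ has finite order, so $\gamma$ is a polynomial, and $T(r,e^{\gamma})=O(T(r,f)+T(r,\mathcal{L}))=O(r\log r)$ forces $\deg\gamma\le 1$.

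The crux is the next step. The Second Main Theorem for $f$ at the five distinct values $a_1,a_2,b_1,b_2,\infty$ gives $\overline N(r,1/Q(f))\ge T(r,f)-S(r,f)$, and by the IM-sharing the zeros of $Q(f)$ are exactly the common $S_2$-points, so there are on the order of $r\log r$ of them; at each one (minus the finitely many zeros and poles of $R$) the value $Re^{\gamma}=P(f)/P(\mathcal{L})$ lies in the three-element set $\{1,\ P(b_1)/P(b_2),\ P(b_2)/P(b_1)\}$. But a non-constant $Re^{\gamma}$ meets three fixed values only $O(r)$ times when $\gamma$ is linear, and only finitely often when $\gamma$ is constant (then $Re^{\gamma}$ is rational); both are too few, so $Re^{\gamma}\equiv\lambda$ for a constant $\lambda\in\{1,P(b_1)/P(b_2),P(b_2)/P(b_1)\}$, i.e. $(f-C(S_1))^2-\lambda(\mathcal{L}-C(S_1))^2=(1-\lambda)\big(\tfrac{a_1-a_2}{2}\big)^2$. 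If $\lambda\neq 1$, taking $\mu^2=\lambda$ factors the left side into two zero-free functions with finitely many poles, whence $\mathcal{L}-C(S_1)$ becomes a combination of $R_1e^{h}$ and $R_2e^{-h}$ with $R_i$ rational and $\deg h\le 1$, so $T(r,\mathcal{L})=O(r)$ — impossible, since a non-constant $L$-function has $T(r,\mathcal{L})\sim\tfrac{d_{\mathcal{L}}}{2\pi}r\log r$. Thus $\lambda=1$, so $(f-C(S_1))^2=(\mathcal{L}-C(S_1))^2$ and therefore $f=\mathcal{L}$ or $f+\mathcal{L}=2C(S_1)$. This is the only place where the arithmetic of $\mathcal{L}$ is indispensable, and it is genuinely needed: for arbitrary meromorphic $f,g$ the statement is false (for instance $e^{z}$ and $e^{-z}$ share $\{a,1/a\}$ and $\{b,1/b\}$ CM with, in general, $C(S_1)\neq C(S_2)$, yet $e^{z}\neq e^{-z}$).

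To finish, if $C(S_1)=C(S_2)$ both alternatives are compatible with the hypotheses — the simultaneous involution $f\mapsto 2C(S_1)-f$, $\mathcal{L}\mapsto 2C(S_1)-\mathcal{L}$ preserves CM-sharing of $S_1$ and, precisely when $C(S_1)=C(S_2)$, IM-sharing of $S_2$ — so $(ii)$ is all one can say. If $C(S_1)\neq C(S_2)$ and $f+\mathcal{L}=2C(S_1)$, evaluate the $S_2$-sharing at an actual $b_i$-point of $f$: then $2C(S_1)-b_i\in\{b_1,b_2\}$, forcing $C(S_1)=b_i$ or $C(S_1)=C(S_2)$; the second is excluded, so $C(S_1)=b_1=b_2$, a contradiction. (Here $f$ really does attain both $b_1$ and $b_2$, because $\mathcal{L}$ omits no finite value — otherwise $\mathcal{L}$ would again be constant $+$ rational $\cdot e^{\text{linear}}$, contradicting its maximal growth — and $f$ omits $b_i$ only if $\mathcal{L}$ does.) Hence $f=\mathcal{L}$, which is $(i)$. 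The expected bottleneck throughout is the third paragraph: ruling out a non-constant $Re^{\gamma}$ and then the exotic constant $\lambda\neq 1$ is exactly what separates $L$-functions from general meromorphic functions, and the argument must exploit the $r\log r$ growth in an essential way.
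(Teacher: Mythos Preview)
The paper does not contain a proof of Theorem~E: it is quoted from Lin--Lin \cite{Lin-Lin-filomat} as background, and no argument for it is given anywhere in the text. So there is no ``paper's own proof'' to compare your attempt against.

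That said, your outline is essentially correct and follows a natural line. The reduction $P(f)/P(\mathcal{L})=Re^{\gamma}$ with $R$ rational and $\gamma$ entire is legitimate because set-CM-sharing of $S_1$ makes the zeros of $P(f)$ and $P(\mathcal{L})$ coincide with multiplicity, and the remaining zeros/poles come only from the finitely many poles of $f$ and $\mathcal{L}$. Your growth comparison $T(r,f)\asymp T(r,\mathcal{L})$ via the Second Main Theorem is fine; note that $S(r,\mathcal{L})=O(\log r)$ is available a priori (order $1$), and the reverse inequality then gives $\rho(f)=1$ a posteriori. The ``crux'' paragraph is the right idea: the common $S_2$-points force $Re^{\gamma}$ to hit at most three fixed values $\asymp r\log r$ times, while a non-constant $Re^{\gamma}$ with $\deg\gamma\le 1$ can only do so $O(r)$ times. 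The elimination of $\lambda\neq 1$ by factoring $(f-C(S_1))^2-\lambda(\mathcal{L}-C(S_1))^2$ and invoking $T(r,\mathcal{L})\sim c\,r\log r$ is clean and is indeed where the $L$-function hypothesis is essential (Lemma~\ref{l2.12} in this paper records exactly the ``no exceptional value'' fact you use at the end). For the final dichotomy, your argument is cleaner if you run it from the $\mathcal{L}$ side: $\mathcal{L}$ genuinely attains both $b_1$ and $b_2$, and $f=2C(S_1)-\mathcal{L}$ then forces $2C(S_1)-b_1,\,2C(S_1)-b_2\in\{b_1,b_2\}$, hence $C(S_1)=b_1$ and $C(S_1)=b_2$ once $C(S_1)\neq C(S_2)$ is used, a contradiction. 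One cosmetic point: you should also dispose of the degenerate possibility $P(b_1)=P(b_2)$ (equivalently $C(S_1)=C(S_2)$ in the two-point case), in which the ``three-element'' target set collapses to $\{1\}$ and the argument is only easier.
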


In the same paper Lin-Lin \cite{Lin-Lin-filomat}, asked the following question:
 
  \begin{ques}
	What can be said about the conclusions of Theorem E if max $\{ \#(S_1), \#(S_2)\} \geq 3$?
\end{ques} To provide an answer to the question of Lin-Lin \cite{Lin-Lin-filomat}, Sahoo-Halder \cite{Hal-Sahoo-cmft} obtained the following result which is also pertinent to {\it Question 1.1}.  
\begin{theoF}\cite{Hal-Sahoo-cmft}
	Let $f$ be a meromorphic function in $\mathbb{C}$ with finitely many poles, and $m(\geq 3)$
	be a positive integer. Suppose that $ S_{1} = \{a_{1}, a_{2}, \ldots,a_{m}\}$, $S_{2} =\{b_{1}, b_{2}\}$ be two
	subsets of $\mathbb{C}$ such that $S_{1}\cap  S_{2}=\phi$ and $(b_{1} - a_{1})^{2}(b_{1}- a_{2})^{2}\ldots (b_{1} - a_{m})^{2} \not=
	(b_{2} -a_{1})^{2}(b_{2} - a_{2})^{2}\ldots (b_{2} - a_{m})^{2}$. If $f$ and a non-constant $L$-function $\mathcal{L}$ share $S_{1}$
	IM and $S_{2}$ CM, then $\mathcal{L} = f$.
\end{theoF}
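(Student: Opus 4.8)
The plan is to pass from set-sharing to zero-sharing of the polynomials $P(w):=\prod_{i=1}^{m}(w-a_i)$ and $Q(w):=(w-b_1)(w-b_2)$ composed with $f$ and $\mathcal L$, and then to exploit throughout the growth gap between a non-constant $L$-function, which satisfies $T(r,\mathcal L)=\frac{d_{\mathcal L}}{\pi}\,r\log r+O(r)$ with $d_{\mathcal L}\ge1$ (so $T(r,\mathcal L)/r\to\infty$) and has a single pole, at $s=1$, and any function of the form $(\mathrm{rational})\cdot e^{(\mathrm{polynomial\ of\ degree}\le1)}$, whose characteristic is $O(r)$. Put $c:=b_1+b_2$; the product hypothesis says precisely $P(b_1)^{2}\ne P(b_2)^{2}$, while the sharing hypotheses say that $P(f),P(\mathcal L)$ share $0$ IM and $Q(f),Q(\mathcal L)$ share $0$ CM.

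First I would record the growth estimates. The Second Main Theorem applied to $f$ for $a_1,\dots,a_m,\infty$, with $\ol N(r,f)=O(\log r)$, gives $(m-1)T(r,f)\le\ol N(r,1/P(f))+S(r,f)+O(\log r)$, and symmetrically for $\mathcal L$; since $\ol N(r,1/P(f))=\ol N(r,1/P(\mathcal L))\le m\min\{T(r,f),T(r,\mathcal L)\}+O(1)$ by the IM sharing of $S_1$, one gets (using $m\ge3$) that $T(r,f)\asymp T(r,\mathcal L)$, that $S(r,f)=S(r,\mathcal L)=:S(r)$, that $f$ has order $\le1$ and is transcendental, and that $\ol N(r,1/P(f))\ge(m-1-o(1))T(r,f)$. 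Next, as $Q(f)$ and $Q(\mathcal L)$ share $0$ CM and have finitely many poles, the common zeros cancel in $\beta:=Q(f)/Q(\mathcal L)$, which therefore has finitely many zeros and poles; hence $\beta=(\mathrm{rational})\cdot e^{(\mathrm{linear})}$, so $T(r,\beta)=O(r)=S(r)$, i.e.\ $\beta$ is small with respect to $f$ and $\mathcal L$.

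I would then show $\beta\equiv1$. At each point of $f^{-1}(S_1)$ — which by the IM sharing is the point set $\mathcal L^{-1}(S_1)$ — one has $\beta=Q(a_i)/Q(a_j)$ for suitable $i,j$, a nonzero value because $S_1\cap S_2=\emptyset$; so on a set of counting function $\ge(m-1-o(1))T(r,f)$, $\beta$ assumes values in a fixed finite set, whereas a non-constant small $\beta$ attains each value at most $T(r,\beta)+O(1)=S(r)$ times — impossible. Hence $\beta\equiv c_0\in\mathbb C^{*}$ and $(f-b_1)(f-b_2)=c_0(\mathcal L-b_1)(\mathcal L-b_2)$. If $c_0\ne1$, completing the square gives $c_0(\mathcal L-\frac c2)^{2}-(f-\frac c2)^{2}=\frac14(c_0-1)(b_1-b_2)^{2}\ne0$, and factoring, $u:=\sqrt{c_0}(\mathcal L-\frac c2)+(f-\frac c2)$ is a non-zero meromorphic function with finitely many zeros and poles and order $\le1$, hence a rational function times $e^{(\mathrm{linear})}$ with non-constant exponent (else $f$ is rational); but then $T(r,\mathcal L)=2T(r,u)+O(1)=O(r)$, a contradiction. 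So $c_0=1$ and $(f-\mathcal L)(f+\mathcal L-c)=0$; if $f+\mathcal L\equiv c$, then since an $L$-function omits no finite value (otherwise $\mathcal L-a=(\mathrm{rational})\cdot e^{(\mathrm{linear})}$, again giving $T(r,\mathcal L)=O(r)$), $\mathcal L$ attains each $c-a_i$, and there $f=a_i\in S_1$, so by IM sharing $c-a_i\in S_1$; thus $w\mapsto c-w$ maps $S_1$ into — hence onto — $S_1$, forcing $P(b_1)^{2}=P(b_2)^{2}$, contradicting the hypothesis. Therefore $f\equiv\mathcal L$.

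The main obstacle is the step forcing $\beta$ to be constant (and then equal to $1$): it rests on the quantitative separation between the growth of an $L$-function and that of an order-one exponential, and it is exactly in the careless handling of such exponential and small-function factors that gaps tend to lurk; this is the portion I would develop with the greatest care, isolating precise lemmas on the characteristic functions involved.
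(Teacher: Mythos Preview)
The paper does not prove Theorem F. In Remark 1.1 it dissects Sahoo--Halder's original argument and isolates its fatal step: from $f,\mathcal L$ sharing $S_{2}=\{b_{1},b_{2}\}$ CM they infer that $P(f),P(\mathcal L)$ share the set $\{P(b_{1}),P(b_{2})\}$ CM, which the remark shows is false in general; on that basis the paper declares the theorem ``not valid'' and retreats to substitute results (Theorems \ref{t1.1}--\ref{t1.2}) built around a special polynomial and the heavier $H,\Phi$ machinery.

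Your route is entirely different and sidesteps the trap the paper identifies. You never pass to sharing of $\{P(b_{1}),P(b_{2})\}$; you use only the legitimate fact that $Q(f)=(f-b_{1})(f-b_{2})$ and $Q(\mathcal L)$ share the single value $0$ CM, so $\beta=Q(f)/Q(\mathcal L)$ has finitely many zeros and poles and, by the order bound, $T(r,\beta)=O(r)=o(T(r,\mathcal L))$. The pigeonhole step (on $f^{-1}(S_{1})=\mathcal L^{-1}(S_{1})$ the function $\beta$ lands in the finite set $\{Q(a_{i})/Q(a_{j})\}$, while a non-constant small $\beta$ can meet any fixed value only $S(r)$ often) forces $\beta$ constant; the difference-of-squares factorisation together with the same growth gap eliminates $\beta\ne 1$; and when $f+\mathcal L\equiv c$ the involution $w\mapsto c-w$ on $S_{1}$ yields $P(b_{2})=\pm P(b_{1})$, contradicting the hypothesis. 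All of this checks out. Two cosmetic slips to fix when you write it up: in the $c_{0}\ne 1$ branch it is $\mathcal L$ (not $f$) that would be rational if the exponent in $u$ were constant, and one has $T(r,\mathcal L)\le 2T(r,u)+O(1)$ rather than equality. In effect your proposal does what the paper declines to attempt --- it rehabilitates Theorem F in full generality by a short, elementary argument that needs neither the flawed set-sharing transfer nor the paper's auxiliary-function apparatus.
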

The above theorem is one of the salient result in \cite{Hal-Sahoo-cmft} and the proof of the same contains the major portion of the paper. 
\begin{rem}
	In the proof of {\it{Theorem F}}, in \cite{Hal-Sahoo-cmft} [p. 608, before (3.3) in the proof of Theorem 1.2] the authors concluded that if $f$ and $\mathcal{L}$ share $S_{1} = \{a_{1}, a_{2}, \ldots,a_{m}\}$ IM and $S_2=\{b_{1},b_{2}\}$ CM, then $P(f)=(f-a_{1})(f-a_{2})\ldots(f-a_{m})$ and $P(\mathcal{L})=(\mathcal{L}-a_{1})(\mathcal{L}-a_{2})\ldots(\mathcal{L}-a_{m})$ share say $S_3=\{c_1,c_2\}$ CM, where $c_1=(b_{1}-a_{1})(b_{1}-a_{2})\ldots(b_{1}-a_{m})$ and $c_2=(b_{2}-a_{1})(b_{2}-a_{2})\ldots(b_{2}-a_{m})\}$ with $c_1^{2}\not =c_2^{2}$. With the help of this argument subsequently (see Case 2.1, (3.4) in the proof of Theorem 1.2 \cite{Hal-Sahoo-cmft}) they set up an entire function $V=e^{u}$ and obtained $T(r,e^{u}/H)=O(r)$ for some rational function $H$ and using this, they proved the rest part of the theorem.\par
	In general, from the basic definition of sharing of sets  this argument is not true for any arbitrary $f$ and $\mathcal{L}$. Below we are explaining the facts: \par
	We first note that given $f$ and $\mathcal{L}$ share the set $S_2=\{b_{1},b_{2}\}$ CM, so any $b_1$ $(b_{2})$ point of $f$ $(\mathcal{L})$ of order say $p$ will be a $b_i$ $(i=1,2)$ point of $\mathcal{L}$ $(f)$ of order $p$. Then noting the  definition of CM sharing of sets we know $P(f)$ and $P(\mathcal{L})$ will share the set $S_{3}$ CM only when the left hand side of the following equation 
	$$P^{2}(h)-(c_1+c_2)P(h)+c_1 c_2=0$$
	could be factorized in the form $(h-b_1)^{m}(h-b_2)^{m}$, where $h=f$ or $\mathcal{L}$ with $c_1^{2}\not =c_2^{2}$. But this is not always possible for any arbitrary choice of $a_i$'s, ($i=1,2,\ldots,m$) and $b_i$'s, ($i=1,2$). When $S_1$ contains one element say $a_1\not =\frac{b_1+b_2}{2}$ then $P^{2}(h)-(c_1+c_2)P(h)+c_1 c_2=(h-a_1)^{2}-(b_1+b_2-2a_1)(h-a_1)+(b_1-a_1)(b_2-a_1)=(h-b_1)(h-b_2)$. If $S_1$ contains two elements say $a_1$, $a_2$, then it is easy to verify $$P^{2}(h)-(c_1+c_2)P(h)+c_1 c_2=(h-b_1)(h-a_1-a_2+b_1)(h-b_2)(h-a_1-a_2+b_2)$$ and so in this case also the arguments in \cite{Hal-Sahoo-cmft} does not hold. Next when $m=3$ that is $S_1$ contains $3$ elements, say $a_1=i$, $a_2=-i$, $a_3=-1$, then considering $b_1=1$, $b_2=0$ it is easy to verify that $c_1=4$ and $c_2=1$. But
	$$P^{2}(h)-5P(h)+4\not=h^{3}(h-1)^{3}.$$ In fact, it is easy to verify that $1$ and $0$ are the simple roots of the equation
	$(t^{2}+1)^{2}(t+1)^{2}-5(t^{2}+1)(t+1)+4=0$. So one can observe a big gap in the proof of {\it Theorem 1.2} \cite {Hal-Sahoo-cmft} and the theorem cease to hold. Actually the condition $c_1^{2}\not =c_2^{2}$ is not sufficient enough to factorize the expression $P^{2}(t)-(c_1+c_2)P(t)+c_1 c_2$ into the form $(t-b_1)^{m}(t-b_2)^{m}$ except for the case $m=1$ with $a_1\not =\frac{b_1+b_2}{2}$. \par  As the entire analysis of Theorem 1.2 \cite {Hal-Sahoo-cmft} is depending upon the statement that if $f$ and $\mathcal{L}$ share $S_{1}$ IM and $S_2$ CM, then $P(f)$ and $P(\mathcal{L})$ share $S_3=\{c_1,c_2\}$ CM, Theorem 1.2 \cite {Hal-Sahoo-cmft} is not valid.   
\end{rem}
\par In this paper though our prime intention is to provide an answer to the question of Yuan-Li-Yi \cite{Yan-Li-Yi_Lith}, but at the same time we have somehow been able to present the corrected form of {\it Theorem F}  concerning a special set introduced in \cite{Ban-Mallick-bohemica} which in turn answer {\it Question 1.2}. 

\par We require the following definitions  for the main results of the paper.
\begin{defi}\cite{Lahiri-comp.var}
	Let k be a nonnegative integer or infinity. For $a\in\overline{\mathbb{C}}$  we
	denote by $E_k(a; f)$ the set of all a-points of f, where an $a$-point of multiplicity $m$ is counted
	$m$ times if $m\leq k$ and $k + 1$ times if $m > k$. If $E_k(a; f) = E_k(a; g)$, we say that $f$, $g$ share
	the value a with weight $k$.
\end{defi}
We write $f$, $g$ share $(a,k)$ to mean that $f$, $g$ share the value $a$ with weight $k$. Clearly if
$f$, $g$ share $(a,k)$ then $f$, $g$ share $(a,p)$ for any integer $p$, $0\leq p < k$. Also we note that $f$, $g$
share a value $a$ IM or CM if and only if $f$, $g$ share $(a,0)$ or $(a,\infty)$ respectively.
\begin{defi}\cite{Lahiri-Nagoya}
	For $S\subset\mathbb{\ol C}$ we define $E_f(S,k)=\cup_{a\in S}E_k(a;f)$, where k is a non-negative integer $a\in S$ or infinity. Clearly $E_f(S)=E_f(S,\infty)$ and $\overline{E}_f(S)=E_f(S,0)$. If $E_f(S,k)=E_g(S,k)$, we say that $f$ and $g$ share the set $S$ with weight $k$.
\end{defi}
\begin{defi} \label{d3}\cite{Lahiri-int.j.sci} For $a\in\mathbb{C}\cup\{\infty\}$ we denote by $N(r,a;f\mid=1)$ the counting function of simple $a$-points of $f$. For a positive integer $m$ we denote by $N(r,a;f\vline\leq m) (N(r,a;f\vline\geq m))$ the counting function of those $a$-points of $f$ whose multiplicities are not greater(less) than $m$ where each $a$-point is counted according to its multiplicity.
	
	$\ol N(r,a;f\mid\leq m) (\ol N(r,a;f\mid\geq m))$ are defined similarly, where in counting the $a$-points of $f$ we ignore the multiplicities.
	
	Also $N(r,a;f\mid <m), N(r,a;f\mid >m), \ol N(r,a;f\mid <m)\; and \;\ol N(r,a;f\mid >m)$ are defined analogously.  \end{defi}

\begin{defi}\label{d5}\cite{Ban-Ann.polon} Let $f$ and $g$ be two non-constant meromorphic functions such that $f$ and $g$ share $(a,0)$. Let $z_{0}$ be an $a$-point of $f$ with multiplicity $p$, an $a$-point of $g$ with multiplicity $q$. We denote by $\ol N_{L}(r,a;f)$ the reduced counting function of those $a$-points of $f$ and $g$ where $p>q$, by $N^{1)}_{E}(r,a;f)$ the counting function of those $a$-points of $f$ and $g$ where $p=q=1$, by $\ol N^{(2}_{E}(r,a;f)$ the reduced counting function of those $a$-points of $f$ and $g$ where $p=q\geq 2$. In the same way we can define $\ol N_{L}(r,a;g),\;  N^{1)}_{E}(r,a;g),\;  \ol N^{(2}_{E}(r,a;g).$ In a similar manner we can define $\ol N_{L}(r,a;f)$ and $\ol N_{L}(r,a;g)$ for $a\in\mathbb{C}\cup\{\infty\}$. \end{defi}
When $f$ and $g$ share $(a,m)$, $m\geq 1$, then $N^{1)}_{E}(r,a;f)=N(r,a;f\mid=1)$.
\begin{defi} \label{d7}\cite{Lahiri-Nagoya,Lahiri-comp.var} Let $f$, $g$ share a value $a$ IM. We denote by $\ol N_{*}(r,a;f,g)$ the reduced counting function of those $a$-points of $f$ whose multiplicities differ from the multiplicities of the corresponding $a$-points of $g$.
	
	Clearly $\ol N_{*}(r,a;f,g) = \ol N_{*}(r,a;g,f)=\ol N_{L}(r,a;f)+\ol N_{L}(r,a;g)$
\end{defi}
\begin{defi}\cite{Fujimoto-Am.J.Math}
	Let $P(z)$ be a polynomial such that $P^{'}(z)$ has mutually $k$ distinct  zeros given by $d_{1}, d_{2}, \ldots, d_{k}$ with multiplicities $q_{1}, q_{2}, \ldots, q_{k}$ respectively. Then $P(z)$ is said to be a critically injective polynomial if $P(d_i)\not =P(d_j)$ for $i\not=j$, where $i,j\in \{1,2,\cdot\cdot\cdot,k\}$ .
\end{defi}
\par From the definition it is obvious that $P(z)$ is injective on the set of distinct zeros of $P'(z)$ which are known as the critical points of $P(z)$. Thus a critically injective polynomial has at-most one multiple zero. We first invoke the following polynomial used in \cite{Ban-Mallick-bohemica}. 
\par We denote by $P(z)=z^{n}+az^{n-m}+bz^{n-2m}+c$ and\\ $\beta_i=-\left( c_{i}^{n}+ac_{i}^{n-m}+bc_{i}^{n-2m}\right)$, where $n, m\in\mathbb{N}$ and $a,b,c\in\mathbb{C^{*}}$ be such that $a^{2}\neq4b$, $\gcd(m,n)=1$, $n>2m$  and $c_i$ be the roots of the equation \be\label{e1} nz^{2m}+a(n-m)z^{m}+b(n-2m)=0,\ee  for $i=1,2,\ldots,2m$. Note that when $\frac{a^2}{4b}=\frac{n(n-2m)}{(n-m)^{2}}$, then (\ref{e1}) reduces to the equation $$n\left(z^{m}+\frac{a(n-m)}{2n}\right)^2-\frac{a^{2}(n-m)^{2}}{4n}+b(n-2m)=0;$$ i.e., \be\label{e2}n\left(z^{m}+\frac{a(n-m)}{2n}\right)^2=0.\ee  Hence in this case (\ref{e1}) has $m$ distinct roots $c_{i}$, $i=1,2,\ldots,m$ each being repeated twice. 

\par In view of the above discussion, we have following theorems which are the main results of the paper.
\begin{theo}\label{t1.1} Let $S=\left\{z:z^{n}+az^{n-m}+bz^{n-2m}+c=0\right\}$, $S'=\{0,c_1,c_2,\ldots,c_m\}$, where $n(\geq 2m+3)$, $\gcd(m,n)=1$, $\frac{a^2}{4b}=\frac{n(n-2m)}{(n-m)^{2}}$ and $a$, $b$, $c\in\mathbb{C^{*}}$ be such that $c\not =\beta_i,\frac{\beta_i\beta_j}{\beta_i+\beta_j}$. Let $f$ be a non constant meromorphic function with finitely many poles and $\mathcal{L}$ be a non constant $L$-function such that   $E_f(S,0)=E_{\mathcal{L}}(S,0)$, $E_f(S',\infty)=E_{\mathcal{L}}(S',\infty)$. Then for $n\geq\max\{2m+3,7\}$ we get $f = \mathcal{L}$.  \end{theo}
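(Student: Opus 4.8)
The plan is to convert the two set-sharing hypotheses into value-sharing statements for the auxiliary functions $P(f),P(\mathcal L),Q(f),Q(\mathcal L)$, where $Q(z):=z(z-c_1)\cdots(z-c_m)$, and then argue by Nevanlinna theory organised around Lahiri's auxiliary function. First some structural facts. Because $\tfrac{a^2}{4b}=\tfrac{n(n-2m)}{(n-m)^2}$, equation (1.1) becomes $n(z^m+\tfrac{a(n-m)}{2n})^2=0$, so $\prod_i(z-c_i)=z^m+\tfrac{a(n-m)}{2n}$ and
\[
P'(z)=z^{\,n-2m-1}\bigl(nz^{2m}+a(n-m)z^m+b(n-2m)\bigr)=n\,z^{\,n-2m-3}\,Q(z)^2 ,
\]
the identity in which $n\ge 2m+3$ is used; thus the critical points of $P$ are exactly $0,c_1,\dots,c_m$, i.e.\ the set $S'$. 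From $a^2\ne 4b$ one gets $\beta_i\ne 0$, and from $\gcd(m,n)=1$ (hence $\gcd(m,n-2m)=1$, so the numbers $c_i^{\,n-2m}$ are pairwise distinct) one gets $\beta_i\ne\beta_j$ for $i\ne j$; with $c\ne\beta_i$ this makes the critical values $P(0)=c$ and $P(c_i)=c-\beta_i$ pairwise distinct and nonzero, so $P$ is critically injective, has $n$ simple zeros $S$, and $S\cap S'=\emptyset$. Consequently $E_f(S,0)=E_{\mathcal L}(S,0)$ means exactly that $P(f)$ and $P(\mathcal L)$ share $0$ IM, and $E_f(S',\infty)=E_{\mathcal L}(S',\infty)$ means $Q(f)$ and $Q(\mathcal L)$ share $0$ CM. Since $f,\mathcal L$ have finitely many poles, $\overline N(r,\infty;f)+\overline N(r,\infty;\mathcal L)=O(\log r)$; since an $L$-function is transcendental and takes at least $n-2\ (\ge1)$ values of $S$ infinitely often, $f$ is not rational (hence transcendental), so $O(\log r)=S(r)$, and the second fundamental theorem for the roots of $P$ gives $(n-1)T(r,f)\le\overline N(r,0;P(f))+S(r,f)\le nT(r,\mathcal L)+S(r,f)$ and symmetrically, so $f$ has finite order, $S(r,f)$ and $S(r,\mathcal L)$ coincide up to the error symbol (write $S(r)$, $T(r):=T(r,f)+T(r,\mathcal L)$), and $\tfrac{Q(f)}{Q(\mathcal L)}=R\,e^{\alpha z}$ with $R$ rational, $\alpha\in\mathbb C$. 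Throughout we will use $(n-1)T(r,f)\le\overline N(r,0;P(f))+S(r)$ and $(n-1)T(r,\mathcal L)\le\overline N(r,0;P(\mathcal L))+S(r)=\overline N(r,0;P(f))+S(r)$.

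Set $F:=P(f)$, $G:=P(\mathcal L)$ and
\[
\mathcal H:=\Bigl(\frac{F''}{F'}-\frac{2F'}{F}\Bigr)-\Bigl(\frac{G''}{G'}-\frac{2G'}{G}\Bigr).
\]
If $\mathcal H\equiv0$, integrating twice gives $\tfrac1F=\tfrac{A}{G}+B$ with a constant $A\ne0$. If $B\ne0$, then $F$ omits $1/B$, i.e.\ $f$ omits every root of $P(z)=1/B$; but from the shape of $P$, the equation $P(z)=v$ has at least $\min\{2m+1,\,n-2\}=2m+1\ (\ge3)$ distinct roots for each $v$ (exactly $n$ unless $v\in\{c,c-\beta_1,\dots,c-\beta_m\}$), so the second fundamental theorem forces $2m\,T(r,f)\le S(r)$, a contradiction. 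Hence $B=0$, i.e.\ $P(f)=\lambda P(\mathcal L)$ for a nonzero constant $\lambda$; the restrictions on $a,b,c$ — critical injectivity together with $c\ne\beta_i$ and $c\ne\beta_i\beta_j/(\beta_i+\beta_j)$, i.e.\ $P(0)^2\ne P(c_i)P(c_j)$ — are exactly those under which \cite{Ban-Mallick-bohemica} shows $P$ to be a strong uniqueness polynomial, so $\lambda=1$ and $f=\mathcal L$.

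It remains to rule out $\mathcal H\not\equiv0$ by a multiplicity count. At a common simple zero of $F$ and $G$, $\mathcal H$ vanishes, so $N_E^{1)}(r,0;F)\le N(r,0;\mathcal H)\le N(r,\infty;\mathcal H)+S(r)$, using $m(r,\mathcal H)=S(r)$. Poles of $\mathcal H$ can only occur at the finitely many poles of $f,\mathcal L$, at common zeros of $F,G$ of differing multiplicity (contributing $\overline N_*(r,0;F,G)$), and at zeros of $F'$ or $G'$ which are not zeros of $F$ or $G$; since $F'=nf^{\,n-2m-1}\prod_i(f-c_i)^2f'$, the last are the $S'$-points of $f$ together with the ramification points of $f$ over $\mathbb C\setminus(S\cup S')$, and likewise for $\mathcal L$. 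Here the CM-sharing of $S'$ is used twice: a common $S'$-point carries equal multiplicities on the two sides, and a short local computation shows $\mathcal H$ is regular at such a point unless exactly one of $f,\mathcal L$ vanishes there while the other takes a value $c_i$ — an exceptional situation subsumed in $\overline N(r,0;f)+\overline N(r,0;\mathcal L)$. Feeding $\overline N(r,0;F)=N_E^{1)}(r,0;F)+\overline N_E^{(2}(r,0;F)+\overline N_*(r,0;F,G)$ and the two second-fundamental-theorem inequalities together, and absorbing the residual $S'$- and ramification terms by applying the second fundamental theorem to the $(n+m+1)$-point set $S\cup S'$ (using $P'=nz^{\,n-2m-3}Q^2$ to rewrite ramification of $F$ in terms of ramification of $f$), one arrives at an estimate $(n-\kappa)T(r)\le S(r)$ with $\kappa\le\max\{2m+2,6\}$. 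As $n\ge\max\{2m+3,7\}$, this is impossible, so $\mathcal H\equiv0$ after all and the preceding paragraph yields $f=\mathcal L$.

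The technical heart — and the step I expect to be the main obstacle — is exactly this last estimate: one must track the multiplicities of $f$ and $\mathcal L$ at the $S$-points (only IM-shared, so the Lahiri defect terms $\overline N_*$ genuinely occur), at the $S'$-points (CM-shared, hence equal multiplicities, but possibly value-mismatched, so a residue of them still produces poles of $\mathcal H$), and at the ordinary ramification points, and balance all of them against the surplus $(n-1)T(r)$ coming from the second fundamental theorem for the $n$-element set $S$; pushing the constant $\kappa$ down to $\max\{2m+2,6\}$ rather than something larger is precisely what the numerical hypothesis $n\ge\max\{2m+3,7\}$ is calibrated for.
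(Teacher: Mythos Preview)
Your overall architecture is correct and matches the paper: define $F=P(f)$, $G=P(\mathcal L)$ (the paper normalizes so that $F,G$ share $1$ rather than $0$, but this is cosmetic), split on whether the Lahiri auxiliary function $H$ vanishes identically, and for $H\equiv 0$ reduce via the resulting M\"obius relation to the strong-uniqueness property of $P$ established in \cite{Ban-Mallick-bohemica}. Your treatment of the $H\equiv 0$ case is essentially fine (the remark that $F$ ``omits'' $1/B$ should really read ``takes $1/B$ only finitely often'', but the second-fundamental-theorem argument you give is unaffected).

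The genuine gap is in the $H\not\equiv 0$ case. You correctly observe that the CM sharing of $S'$ kills most of the $S'$--contribution to $N(r,\infty;H)$, leaving only the ``mismatched'' points where one of $f,\mathcal L$ is $0$ and the other is some $c_j$; this is precisely the content of the paper's Lemma~\ref{l2.5}. But this by itself is not enough. When you then ``apply the second fundamental theorem to the $(n+m+1)$-point set $S\cup S'$'', the $(m+1)$ terms $\sum_{i=0}^{m}\ol N(r,c_i;f)$ and $\sum_{i=0}^{m}\ol N(r,c_i;\mathcal L)$ remain, and a direct bound $\sum\le (m+1)T$ together with $\ol N_*\le T$ yields only $(\frac{n}{2}+m)T(r)\le \frac{3}{2}T(r)+\frac{3(m+1)}{2}T(r)+O(\log r)$, i.e.\ a contradiction for $n\ge m+7$, not for $n\ge\max\{2m+3,7\}$. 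In particular already for $m=1$ this gives $n\ge 8$ rather than $n\ge 7$, and the discrepancy grows with $m$.

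What you are missing is the second auxiliary function
\[
\Phi=\frac{F'}{F-1}-\frac{G'}{G-1}
\]
(the paper's Lemma~\ref{l2.6}, taken from \cite{Ban-Mallick-bohemica}). The point is that at any $S'$-point where $f$ and $\mathcal L$ take the \emph{same} value with the \emph{same} multiplicity, $\Phi$ has a zero of order at least $n-2m-1$ (from the factor $z^{\,n-2m-1}\prod(z-c_i)^2$ in $P'$), whereas $\Phi$ has only simple poles. This converts the entire $S'$--contribution $\ol N(r,0;f)+\ol N\!\left(r,0;f^{m}+\tfrac{a(n-m)}{2n}\right)$ into $\tfrac12\ol N_*(r,1;F,G)+O(\log r)$, after which the computation closes exactly as in the paper's (\ref{e3.1}) and yields the contradiction for $n\ge 7$. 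Your sketch (``absorbing the residual $S'$- and ramification terms by applying the second fundamental theorem'') does not capture this mechanism, and without it the constant $\kappa$ you assert cannot be pushed down to $\max\{2m+2,6\}$.
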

\begin{cor}Putting $a=-\frac{2n}{n-1}$, $b=\frac{n}{n-2}$, $c=\frac{2c'}{(n-1)(n-2)}$ and $m=1$ in {\it{Theorem \ref{t1.1}}} we have $S=\{z:z^{n}-\frac{2n}{n-1}z^{n-1}+\frac{n}{n-2}z^{n-2}+\frac{2c'}{(n-1)(n-2)} \;(c'\not=0,-1)\}$ and $S'=\{0,1\}$.
	Clearly if a nonconstant meromorphic function $f$ with finitely many poles and a non constant $L$-function $\mathcal{L}$, such that $E_f(S,0)=E_{\mathcal{L}}(S,0)$, $E_f(S',\infty)=E_{\mathcal{L}}(S',\infty)$ then for $n\geq 7$ we will get $f=\mathcal{L}$. Hence for $m=1$ we get a particular case of {\it{Theorem F}}.
\end{cor}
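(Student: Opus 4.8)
The plan is to obtain the Corollary as a direct specialization of Theorem \ref{t1.1}, so the whole task reduces to checking that the displayed choice of parameters satisfies every hypothesis of that theorem and that the excluded values of $c'$ correspond exactly to the excluded values of $c$. First I would set $m=1$ and substitute $a=-\frac{2n}{n-1}$, $b=\frac{n}{n-2}$. A short computation gives $a^2=\frac{4n^2}{(n-1)^2}$ and $4b=\frac{4n}{n-2}$, whence $\frac{a^2}{4b}=\frac{n(n-2)}{(n-1)^2}=\frac{n(n-2m)}{(n-m)^2}$, so the defining relation $\frac{a^2}{4b}=\frac{n(n-2m)}{(n-m)^2}$ of Theorem \ref{t1.1} holds identically. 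The remaining structural hypotheses are then automatic: $\gcd(m,n)=\gcd(1,n)=1$; the bound $n>2m$ becomes $n>2$, which is implied by $n\geq 7$; and $a^2\neq 4b$ holds because $\frac{n(n-2)}{(n-1)^2}=1$ would force $0=1$. Finally the degree bound $n\geq\max\{2m+3,7\}=\max\{5,7\}=7$ is precisely the hypothesis $n\geq 7$ stated in the Corollary.

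Next I would identify the two sets. Since $\frac{a^2}{4b}=\frac{n(n-2m)}{(n-m)^2}$ holds, equation (\ref{e2}) applies, and for $m=1$ it reduces to $n\bigl(z+\frac{a(n-1)}{2n}\bigr)^2=0$, giving the single doubled root $c_1=-\frac{a(n-1)}{2n}$. Substituting $a=-\frac{2n}{n-1}$ yields $c_1=1$, so that $S'=\{0,c_1\}=\{0,1\}$ exactly as asserted, while $S$ is by definition the zero set of $P(z)=z^n+az^{n-1}+bz^{n-2}+c$ with the stated $a,b,c$.

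It then remains to translate the exclusions $c\neq\beta_i,\frac{\beta_i\beta_j}{\beta_i+\beta_j}$ into conditions on $c'$. Because $m=1$ produces only one distinct root of (\ref{e1}), there is a single value $\beta_1=-\bigl(c_1^n+ac_1^{n-1}+bc_1^{n-2}\bigr)=-(1+a+b)$; clearing denominators gives $1+a+b=1-\frac{2n}{n-1}+\frac{n}{n-2}=\frac{2}{(n-1)(n-2)}$, so $\beta_1=-\frac{2}{(n-1)(n-2)}$. With $c=\frac{2c'}{(n-1)(n-2)}$, the equation $c=\beta_1$ is equivalent to $c'=-1$, and the requirement $c\in\mathbb{C}^*$, i.e.\ $c\neq 0$, is equivalent to $c'\neq 0$; both are exactly the exclusions imposed on $c'$. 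The pairwise constraints $\frac{\beta_i\beta_j}{\beta_i+\beta_j}$ with $i\neq j$ are vacuous here, there being only one $\beta$.

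Having confirmed every hypothesis, I would invoke Theorem \ref{t1.1} directly: for a nonconstant meromorphic $f$ with finitely many poles and a nonconstant $L$-function $\mathcal{L}$ satisfying $E_f(S,0)=E_{\mathcal{L}}(S,0)$ and $E_f(S',\infty)=E_{\mathcal{L}}(S',\infty)$, the conclusion $f=\mathcal{L}$ follows for $n\geq 7$. Since here $S'=\{0,1\}$ is a two-element set shared CM while $S$ is shared IM, this reproduces the $S_2$-CM / $S_1$-IM configuration of Theorem F, giving the claimed particular case. The only care required is the arithmetic above; there is no separate analytic obstacle, as all the function-theoretic content is already carried by Theorem \ref{t1.1}.
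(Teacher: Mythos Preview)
Your proposal is correct and matches the paper's intent: the Corollary is stated without proof because it is meant to be an immediate specialization of Theorem~\ref{t1.1}, and your argument is exactly that verification. The arithmetic checks (that $\frac{a^2}{4b}=\frac{n(n-2)}{(n-1)^2}$, that $c_1=1$, that $\beta_1=-\frac{2}{(n-1)(n-2)}$ so $c=\beta_1\Leftrightarrow c'=-1$, and that the $\frac{\beta_i\beta_j}{\beta_i+\beta_j}$ constraint is vacuous for a single $\beta$) are all accurate and are precisely what is needed.
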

\begin{theo}\label{t1.2} Let $S$ and $S'$ be defined as in {\em Theorem \ref{t1.1}}.  Let $f$ be a non constant meromorphic function with finitely many poles and $\mathcal{L}$ be a non constant $L$-function such that $E_{f}(S,s)=E_{\mathcal{L}}(S,s)$, and $E_{f}(\{\alpha\},0)=E_{\mathcal{L}}(\{\alpha\},0)$ for some $\alpha\in S'$. For \\(I) $\alpha=0$ and  
	\\(i) $s\geq 2$, $n\geq 2m+2$ or
	\\(ii) $s=1$, $n\geq 2m+3$ or 
	\\(iii) $s=0$, $n\geq2m+5$; we have $f = \mathcal{L}$.\par Next suppose \\(II) $\alpha\not=0$. If 
	\\(i) $s\geq 1$ and $n\geq 2m+4$ or 
	\\(ii) $s=0$ and $n\geq 2m+7$; then  
	we have $f = \mathcal{L}$.
\end{theo}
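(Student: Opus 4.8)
The strategy is to convert the set-sharing hypotheses into value-sharing hypotheses for the auxiliary functions $F:=P(f)$ and $G:=P(\mathcal L)$, where $P(z)=z^{n}+az^{n-m}+bz^{n-2m}+c$, and then to run the standard weighted-sharing machinery. First I would note that the assumption $c\neq\beta_{i}$ forces $P$ to have only simple zeros: a multiple zero of $P$ would be a common zero of $P$ and $P'(z)=nz^{n-2m-1}\big(z^{m}+\tfrac{a(n-m)}{2n}\big)^{2}$, hence $0$ or some $c_{i}$, but $P(0)=c\neq 0$ and $P(c_{i})=c-\beta_{i}\neq 0$. Consequently $S\cap S'=\emptyset$ and $E_{f}(S,s)=E_{\mathcal L}(S,s)$ is equivalent to $F$ and $G$ sharing $(0,s)$. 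Since $f$ has finitely many poles and $\mathcal L$ a single pole, $F$ and $G$ have finitely many poles, so $\overline N(r,\infty;F)=S(r,f)$, $\overline N(r,\infty;G)=S(r,\mathcal L)$, $T(r,F)=nT(r,f)+O(1)$ and $T(r,G)=nT(r,\mathcal L)+O(1)$. The structural facts I would use repeatedly are that $P(z)-c=z^{n-2m}(z^{2m}+az^{m}+b)$ with $z^{2m}+az^{m}+b$ having $2m$ distinct non-zero roots (as $a^{2}\neq 4b$), that $0$ is a critical point of $P$ of multiplicity $n-2m-1$ and each $c_{i}$ a critical point of multiplicity $2$, and hence that an $\alpha$-point of $f$ produces a zero of $F-P(\alpha)$ of multiplicity $\geq n-2m$ when $\alpha=0$ and of multiplicity $\geq 3$ when $\alpha=c_{i}$. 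This last point is precisely the mechanism by which the shared value $\alpha\in S'$ enters the estimates and explains why the admissible range of $n$ is smaller for $\alpha=0$.

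Next I would introduce the Lahiri-type function
$$H=\Big(\frac{F''}{F'}-\frac{2F'}{F}\Big)-\Big(\frac{G''}{G'}-\frac{2G'}{G}\Big)$$
and argue by cases. \emph{If $H\not\equiv 0$}, then $H$ has only simple poles, located among the zeros of $F'$ off the zero set of $F$ (resp. of $G'$ off that of $G$), the multiple zeros of $F$ and $G$, the finitely many poles, and — controlled by the weight $s$ — the common zeros of $F$ and $G$ at which the two multiplicities differ. Starting from $N(r,0;F\mid=1)\leq\overline N(r,\infty;H)+S(r,f)+S(r,\mathcal L)$, bounding $\overline N(r,\infty;H)$, invoking the second main theorem for $F$ (with the $n$ simple roots of $P$, the ramification forced at $c$ and at the values $c-\beta_{i}$, and the extra $\alpha$-point information), and symmetrically for $G$, I expect to reach an inequality $\Theta\,\{T(r,f)+T(r,\mathcal L)\}\leq S(r,f)+S(r,\mathcal L)$ with $\Theta=\Theta(n,m,s)>0$ precisely on the ranges of $n$ listed in (I)(i)--(iii) and (II)(i)--(ii). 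This contradiction forces $H\equiv 0$. Carrying out this computation with sharp numerology — the three weight sub-cases $s=0,1,\geq 2$ and the separate treatments of $\alpha=0$ and $\alpha\neq 0$ — is the main obstacle and is where essentially all the work lies.

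\emph{If $H\equiv 0$}, integrating twice gives $\dfrac1F=\dfrac{A}{G}+B$ for constants $A\,(\neq 0),B$. If $B\neq 0$, then $F=\dfrac{G}{A+BG}$ has finitely many poles, so $G+A/B=P(\mathcal L)+A/B$ has only finitely many zeros; hence each linear factor $\mathcal L-\rho$ in the factorization of $P(\mathcal L)+A/B$ has only finitely many zeros, and since $P(z)+A/B$ has at least two distinct roots (because $P'$ is not of the form $n(z-\rho)^{n-1}$, as $n\geq 2m+2$), $\mathcal L$ would have at least two finite Picard exceptional values — impossible for a non-constant $L$-function, which, being transcendental of finite order with a single pole, has at most one. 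Hence $B=0$ and $F=G/A$. Evaluating at a point where $\mathcal L=\alpha$ (such points exist, as an $L$-function has no Picard exceptional value), at which $f=\alpha$ as well by the shared value, gives $P(\alpha)=P(\alpha)/A$ with $P(\alpha)\neq 0$ because $S\cap S'=\emptyset$; therefore $A=1$ and $P(f)\equiv P(\mathcal L)$. The remaining arithmetic conditions on $a,b,c$ — in particular $c\neq\tfrac{\beta_{i}\beta_{j}}{\beta_{i}+\beta_{j}}$ — would be used to rule out the residual degenerate configurations in this branch, where a genuine Möbius relation between $F$ and $G$ linking the critical values $c,\,c-\beta_{i},\,c-\beta_{j}$ survives.

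Finally, to deduce $f\equiv\mathcal L$ from $P(f)\equiv P(\mathcal L)$, I would set $h=f/\mathcal L$ and rewrite the identity as $\mathcal L^{2m}(h^{n}-1)+a\mathcal L^{m}(h^{n-m}-1)+b(h^{n-2m}-1)=0$. If $h$ is constant, then $h^{n}-1=h^{n-m}-1=h^{n-2m}-1=0$ (otherwise $\mathcal L$ would satisfy a fixed polynomial equation and be constant), and $\gcd(m,n)=1$ forces $h=1$, i.e. $f\equiv\mathcal L$. If $h$ is non-constant, then regarding the identity as a quadratic in $\mathcal L^{m}$, its discriminant $a^{2}(h^{n-m}-1)^{2}-4b(h^{n}-1)(h^{n-2m}-1)$ has degree exactly $2n-2m$ (its leading coefficient being $a^{2}-4b\neq 0$), and I would derive a contradiction by combining the meromorphy of $\mathcal L^{m}$, the fact that $P(f)\equiv P(\mathcal L)$ makes $f$ of finite order with finitely many poles, the value-distribution of the $L$-function $\mathcal L$, and the count of $\alpha$-points imposed by the shared value. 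This eliminates the non-constant case and yields $f\equiv\mathcal L$, completing the proof.
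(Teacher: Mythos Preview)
Your architecture is right, but two places are genuinely incomplete. The $H\not\equiv 0$ case is where all the thresholds on $n$ come from, and your stated mechanism---ramification of $F$ at the critical values $c$ and $c-\beta_i$---is not how the paper extracts them. The paper's route is: sharing $\alpha$ replaces $\ol N(r,\alpha;f)+\ol N(r,\alpha;\mathcal L)$ by the smaller $\ol N_*(r,\alpha;f,\mathcal L)$ in the pole bound for $H$ (a modified Lemma~\ref{l2.4}); then, crucially for $\alpha=0$, the auxiliary function $\Phi=\tfrac{F'}{F-1}-\tfrac{G'}{G-1}$ (Lemma~\ref{l2.10}) is used to bound $\ol N(r,0;f)=\ol N(r,0;\mathcal L)$ by $\tfrac{1}{n-2m-1}\ol N_*(r,1;F,G)+O(\log r)$, and Lemma~\ref{l2.9} converts $\ol N_*(r,1;F,G)$ into $\tfrac{1}{s+1}\big(\ol N(r,0;f)+\ol N(r,0;\mathcal L)\big)+O(\log r)$. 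It is this interplay of $\Phi$ and the weight $s$ that produces the distinct bounds in (I)(i)--(iii) versus (II)(i)--(ii); your sketch contains neither ingredient. Second, your non-constant $h$ step is too vague. The contradiction hinges on the precise factorization of the discriminant $\phi(h)=a^2(h^{n-m}-1)^2-4b(h^n-1)(h^{n-2m}-1)$ supplied by Lemma~\ref{l2.7}: $\phi$ has one zero of multiplicity~$4$ (at $h=1$) and $2n-2m-4$ \emph{simple} zeros $\nu_i$. Completing the square gives $\big(f^m+\tfrac{a}{2}\tfrac{h^{n-m}-1}{h^n-1}\big)^2=\tfrac{\phi(h)}{4(h^n-1)^2}$, so every $\nu_i$-point of $h$ has multiplicity $\geq 2$; the second main theorem for $h$ then yields $(2n-2m-4)T(r,h)\leq (n-m-2)T(r,h)+S(r,h)$. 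Your appeal to ``meromorphy of $\mathcal L^m$'' and the shared-value count does not replace this square/simple-zero mechanism.

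That said, your $H\equiv 0$ branch is cleaner than the paper's. For $B\neq 0$ you correctly note that $F=G/(A+BG)$ forces $P(\mathcal L)+A/B$ to have finitely many zeros, and since $P'$ has at least two distinct roots this gives $\mathcal L$ a finite (generalized) Picard exceptional value, contradicting Lemma~\ref{l2.12}; the paper instead splits into subcases $A-B\neq 0$, $A-B=0$, and (in Case~II) $\tfrac{B-A}{B}=\tfrac{\beta_m}{c}$. For $B=0$ your evaluation of $P(f)=P(\mathcal L)/A$ at a shared $\alpha$-point (which exists by Lemma~\ref{l2.12}, and $P(\alpha)\neq 0$ since $\alpha\in S'$ and $S\cap S'=\emptyset$) immediately gives $A=1$; the paper reaches the same conclusion in Case~I but in Case~II runs an extra case split (Subcases~II-2.1.1/2) invoking $c\neq\tfrac{\beta_i\beta_j}{\beta_i+\beta_j}$, which your argument bypasses.
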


\section{lemmas}
Next, we present some lemmas that will be needed in the sequel. Henceforth, we denote by $H$, $\Phi$ the following functions:
\bea\label{e2.1} H=\bigg(\frac{F''}{F'}-\frac{2F'}{F-1}\bigg)-\bigg(\frac{G''}{G'}-\frac{2G'}{G-1}\bigg)\;\eea
and
\bea\label{e2.2}\Phi=\frac{F'}{F-1}-\frac{G'}{G-1}.\eea
Let $f$ and $g$ be two non-constant meromorphic functions and for an integer $n\geq 2m+1$ 
\be\label{e2.3} F=\frac{f^{n-2m}(f^{2m}+af^{m}+b)}{-c},\;\;\;  G=\frac{g^{n-2m}(g^{2m}+ag^{m}+b)}{-c}. \ee
\begin{lem}\label{l2.1}\cite{Yi-kodai(1999)}
	Let $F$ and $G$ share $1$ IM and $H\not\equiv 0$. Then,
	$$N^{1)}_{E}(r,1;F)\leq N(r,\infty;H)+S(r,F)+S(r,G).$$
\end{lem}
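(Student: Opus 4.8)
The plan is to localize: I claim that every simple common $1$-point of $F$ and $G$ --- the points contributing to $N^{1)}_{E}(r,1;F)$, a notion available here since $F$ and $G$ share $1$ IM --- is a zero of the function $H$ of \eqref{e2.1}, and then to bound the zeros of $H$ by $T(r,H)$.

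To prove the claim I would run a local Laurent expansion at such a point $z_0$. Writing $w=z-z_0$ and $F(z)-1=aw+bw^2+cw^3+O(w^4)$ with $a\ne0$ (the $1$-point being simple), a direct computation gives $\dfrac{F'}{F-1}=\dfrac1w+\dfrac ba+O(w)$ and $\dfrac{F''}{F'}=\dfrac{2b}{a}+O(w)$, hence $\dfrac{F''}{F'}-\dfrac{2F'}{F-1}=-\dfrac2w+O(w)$, the essential point being that the constant term $2b/a$ cancels, so no term of order $w^{0}$ survives. Since $G$ likewise has a simple $1$-point at $z_0$, the same identity holds with $G$ in place of $F$. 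Subtracting, the principal parts $-2/w$ cancel (they have the same residue $-2$) and the two vanishing constant terms cancel, so $H$ is holomorphic at $z_0$ with $H(z_0)=0$; here the hypothesis $H\not\equiv0$ is precisely what keeps $N(r,0;H)$ meaningful. Hence $N^{1)}_{E}(r,1;F)\le\overline N(r,0;H)\le N(r,0;H)$.

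It then remains to estimate $N(r,0;H)$. By the first fundamental theorem, $N(r,0;H)\le T(r,1/H)+O(1)=T(r,H)+O(1)=m(r,H)+N(r,\infty;H)+O(1)$. Since $H$ is a combination of logarithmic derivatives --- $F''/F'=(\log F')'$, $F'/(F-1)=(\log(F-1))'$, and likewise for $G$ --- the lemma on the logarithmic derivative \cite{W.K.Hayman_64}, together with $T(r,F')=O(T(r,F))$ and $T(r,G')=O(T(r,G))$, yields $m(r,H)\le S(r,F)+S(r,G)$. Combining, $N^{1)}_{E}(r,1;F)\le N(r,\infty;H)+S(r,F)+S(r,G)$, as asserted.

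The one step with genuine content is the local identity $\dfrac{F''}{F'}-\dfrac{2F'}{F-1}=-\dfrac2w+O(w)$ at a simple $1$-point: the vanishing of the $w^{0}$-coefficient is exactly the feature that makes the specific combination defining $H$ work, and this is where I would be most careful --- carrying the expansions to sufficient order, and observing that at such a $z_0$ the function $F$ has no pole and $F'$ no zero, so $F''/F'$ is automatically holomorphic. The remaining estimates are routine Nevanlinna bookkeeping.
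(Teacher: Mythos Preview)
Your argument is correct and is the standard proof of this classical lemma. Note, however, that the paper does not supply its own proof of Lemma~\ref{l2.1}: the result is simply quoted from \cite{Yi-kodai(1999)}, so there is no in-paper proof to compare against. Your local computation showing that $\dfrac{F''}{F'}-\dfrac{2F'}{F-1}=-\dfrac{2}{w}+O(w)$ at a simple $1$-point is exactly the mechanism underlying Yi's lemma, and the subsequent Nevanlinna estimates (first fundamental theorem plus the logarithmic-derivative lemma, using $T(r,F')\leq 2T(r,F)+S(r,F)$ to absorb $S(r,F')$ into $S(r,F)$) are routine and correctly stated.
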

\begin{lem}\label{l2.2}\cite{Mokhon'ko-lem}
	Let $P(f)=\sum_{k=0}^{n} a_{k}f^{k}/ \sum_{j=0}^{m}b_{j}f^{j},$ be an irreducible polynomial in $f$, with constants coefficient $ \{a_{k}\}$ and $\{b_{j}\}$ where $a_{n}\not=0$ and $b_{m}\not=0$. Then $$T(r,P(f))=dT(r,f)+S(r,f),$$ where $d=\max\{m,n\}.$
\end{lem}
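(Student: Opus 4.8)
The statement is the classical Valiron--Mokhon'ko theorem, so I read $P(f)$ as the rational function $R(f)=P(f)/Q(f)$ with $P(w)=\sum_{k=0}^{n}a_kw^k$, $Q(w)=\sum_{j=0}^{m}b_jw^j$ coprime (this is what ``irreducible'' encodes), $a_n,b_m\neq0$, and $d=\max\{m,n\}$. The plan is to prove the two inequalities $T(r,R(f))\le d\,T(r,f)+O(1)$ and $T(r,R(f))\ge d\,T(r,f)+S(r,f)$ separately and then fuse them. Throughout I would exploit that $R\colon\ol{\mathbb C}\to\ol{\mathbb C}$ has geometric degree exactly $d$, which is where coprimality of $P,Q$ enters: for all but finitely many $a\in\mathbb C$ the polynomial $P(w)-aQ(w)$ has degree $d$ and $d$ distinct simple roots $w_1(a),\dots,w_d(a)$, none of which is a zero of $Q$.

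For the upper bound I fix one such generic value $a$ and write $R(f)-a=c\,\prod_{i=1}^{d}(f-w_i)/Q(f)$. Because the $w_i$ are distinct and $Q(w_i)\neq0$, the zeros of $R(f)-a$ are precisely the points where $f$ hits some $w_i$, with matching multiplicities, so $N(r,1/(R(f)-a))=\sum_{i=1}^{d}N(r,1/(f-w_i))$ exactly (the poles of $f$ and the $Q(f)=0$ points produce no spurious zeros, since $a$ avoids $R(\infty)$). A partial-fraction expansion $1/(R(f)-a)=C+\sum_i A_i/(f-w_i)$ together with subadditivity of the proximity function gives $m(r,1/(R(f)-a))\le\sum_i m(r,1/(f-w_i))+O(1)$. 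Adding the two and invoking the first fundamental theorem (so that $T(r,R(f))=T(r,1/(R(f)-a))+O(1)$) collapses everything into $T(r,R(f))\le\sum_{i=1}^{d}T(r,1/(f-w_i))+O(1)=d\,T(r,f)+O(1)$; the crucial point is that pairing $N$ and $m$ value-by-value avoids the wasteful factor one would get from naively bounding $T(r,P(f)/Q(f))$ by $T(r,P(f))+T(r,Q(f))$.

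For the lower bound I would bring in the second fundamental theorem with many values at once. Choosing $q$ generic values $a_1,\dots,a_q$ so that the total collection of roots $\{w_i(a_\nu)\}$ consists of $qd$ distinct points, the second fundamental theorem applied to $f$ yields $(qd-2)\,T(r,f)\le\sum_{\nu=1}^{q}\sum_{i=1}^{d}\ol N(r,w_i(a_\nu);f)+S(r,f)$; since $\sum_i N(r,w_i(a_\nu);f)=N(r,a_\nu;R(f))\le T(r,R(f))+O(1)$ by the first fundamental theorem, this becomes $(qd-2)\,T(r,f)\le q\,T(r,R(f))+S(r,f)$, that is, $T(r,R(f))\ge(d-2/q)\,T(r,f)+S(r,f)$.

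Finally I would fuse the bounds. The upper bound already forces $S(r,R(f))=S(r,f)$ and $d\,T(r,f)-T(r,R(f))\ge -O(1)$. Since $q$ is at our disposal, the lower bound shows $d\,T(r,f)-T(r,R(f))\le (2/q)\,T(r,f)+S(r,f)$ for every $q$, so the ratio $(d\,T(r,f)-T(r,R(f)))/T(r,f)$ has $\limsup\le 2/q$ for all $q$ and hence tends to $0$; combined with the lower estimate on the same difference (which forces $\liminf\ge 0$) this gives $d\,T(r,f)-T(r,R(f))=S(r,f)$, which is the assertion. The main obstacle I anticipate is purely technical bookkeeping: arranging the finite exceptional set of values $a$ so that, simultaneously, $\deg(P-aQ)=d$ (delicate only in the balanced case $n=m$, where one must avoid $a=a_n/b_m$), the roots $w_i(a)$ are simple, and across the $q$ chosen values all $qd$ roots are distinct; and then carrying out the limiting step $q\to\infty$ carefully enough to upgrade the $2/q$ error term into a genuine $S(r,f)$.
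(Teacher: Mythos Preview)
The paper does not prove this lemma at all: it is quoted verbatim from Mokhon'ko's paper \cite{Mokhon'ko-lem} and used as a black box, so there is no ``paper's own proof'' to compare against.

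Your argument is essentially sound. The upper bound via a generic fibre and partial fractions is exactly the classical device, and your second-fundamental-theorem lower bound with $qd$ preimage values is a legitimate route to the reverse inequality. The limiting step $q\to\infty$ does go through once one notes that the exceptional set in the second fundamental theorem comes solely from the logarithmic-derivative lemma for $f'/f$ and hence can be taken independent of $q$; the residual $q$-dependent part of the error is a \emph{constant} $C_q$ (involving $q\log q$ and the minimal gap among the chosen preimages), so for each fixed $q$ one has $\limsup_{r\to\infty,\,r\notin E}\big(dT(r,f)-T(r,R(f))\big)/T(r,f)\le 2/q$ with a single $E$, and the conclusion follows. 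It is worth recording, however, that the classical proof avoids the second fundamental theorem altogether and yields the sharper $T(r,R(f))=dT(r,f)+O(1)$: for the same generic $a$ one matches your exact identity $N(r,a;R(f))=\sum_i N(r,w_i;f)$ with a two-sided estimate $m\big(r,1/(R(f)-a)\big)=\sum_i m\big(r,1/(f-w_i)\big)+O(1)$, the lower half coming from the observation that the discs $\{|f-w_i|<\delta\}$ are disjoint for small $\delta$ and on each of them $|R(f)-a|\asymp|f-w_i|$. Your approach is correct but trades this local estimate for an appeal to the second fundamental theorem, at the cost of the weaker error term and the extra bookkeeping you anticipated.
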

\begin{lem}\label{l2.3}\cite{Ban_Tamkang}
If $F$ and $G$ share $(1,s)$, $0\leq s<\infty$, then $$\ol N(r,1;F)+\ol N(r,1;G)+\left(s-\frac{1}{2}\right)\ol N_{*}(r,1;F,G)-N^{1)}_{E}(r,1;F) \leq \frac{1}{2}\big( N(r,1;F)+ N(r,1;G)\big).$$
\end{lem}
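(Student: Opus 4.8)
The plan is to prove the inequality by a purely local (point-by-point) comparison of the counting functions, exploiting the structure that weighted sharing $(1,s)$ forces on the common $1$-points of $F$ and $G$. First I would record the basic consequence of this sharing. Since sharing $(1,s)$ implies sharing $(1,0)$, the $1$-points of $F$ and of $G$ coincide as sets; let $z_0$ be such a point, of multiplicity $p$ for $F$ and $q$ for $G$. If one multiplicity, say $p$, satisfied $p\leq s$, then $E_s(1;F)$ would weight $z_0$ by exactly $p$, and since $E_s(1;F)=E_s(1;G)$ the weight of $z_0$ in $G$ is then $p$ as well, which (because $q>s$ would give weight $s+1>p$) forces $q=p$. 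Hence $p\neq q$ forces $\min\{p,q\}\geq s+1$. Consequently the $1$-points split into three mutually exclusive classes: (a) $p=q=1$; (b) $p=q\geq 2$; (c) $p\neq q$ with $\min\{p,q\}\geq s+1$, the last class subdividing according to $p>q$ (contributing to $\ol N_{L}(r,1;F)$) or $q>p$ (contributing to $\ol N_{L}(r,1;G)$), each contributing $1$ to $\ol N_{*}(r,1;F,G)$.

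Next I would express the contribution of each class to every counting function in the statement and verify, within each class, that the right-hand side dominates the left-hand side. Each counting function here is an integrated sum over the $1$-points with a prescribed nonnegative local weight, so it suffices to compare the local weights at one representative point of each type. For a point of type (a) the contribution of $\tfrac12\big(N(r,1;F)+N(r,1;G)\big)$ minus the left-hand side is $1-(1+1+0-1)=0$; for a point of type (b) with $p=q=t\geq 2$ it is $t-(1+1+0-0)=t-2\geq 0$; and for a point of type (c) it is $\tfrac12(p+q)-\big(1+1+(s-\tfrac12)\cdot 1-0\big)=\tfrac12(p+q)-s-\tfrac32$, since here $N^{1)}_{E}(r,1;F)$ contributes nothing while $\ol N_{*}(r,1;F,G)$ and both reduced counting functions contribute $1$.

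The crux is the nonnegativity in class (c), that is, $\tfrac12(p+q)-s-\tfrac32\geq 0$, equivalently $p+q\geq 2s+3$. I would derive this from the structural fact $\min\{p,q\}\geq s+1$ together with $p\neq q$: the larger multiplicity is then at least $s+2$ and the smaller at least $s+1$, so $p+q\geq (s+2)+(s+1)=2s+3$. This is precisely where the coefficient $s-\tfrac12$ attached to $\ol N_{*}$ is calibrated so that the multiplicity-differing ($L$-type) points are absorbed; for $s=0$ the same computation gives $\tfrac12(p+q)-\tfrac32\geq 0$ from $p+q\geq 3$. Summing the local inequalities over all $1$-points—equivalently, integrating the corresponding nonnegative counting functions and passing to the $N(r,\cdot)$ notation—then yields the stated global inequality. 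I expect the only real care to be bookkeeping: keeping the three classes disjoint and exhaustive, and correctly attributing $N^{1)}_{E}$, $\ol N_{*}$, and the reduced counting functions in each class, rather than any genuine analytic obstacle.
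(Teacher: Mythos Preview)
Your argument is correct: the three-class decomposition of the common $1$-points is exhaustive and mutually exclusive under weighted sharing $(1,s)$, and in each class the local weight of the right-hand side dominates that of the left-hand side, with the key estimate $p+q\geq 2s+3$ in class (c) following exactly as you say from $\min\{p,q\}\geq s+1$ together with $p\neq q$. Summing (equivalently, comparing the unintegrated counting functions $n(t,\cdot)$ and then integrating) yields the global inequality.

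As for comparison with the paper: the paper does not actually prove this lemma at all; it is quoted from \cite{Ban_Tamkang} without proof. Your pointwise bookkeeping argument is in fact the standard way this inequality is established in the literature, so there is no meaningful methodological difference to report.
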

\begin{lem}\label{l2.4}
 Let $F$, $G$ be given by (\ref{e2.3}) and $E_{f}(S,s)=E_{g}(S,s)$  where $S$ is given as in {\em Theorem \ref{t1.1}} and $H\not\equiv 0$. Then we have \beas \; N(r,\infty;H)&\leq& \ol N(r,0;f)+\ol N\left(r,0;f^{m}+\frac{a(n-m)}{2n}\right)+\ol N(r,0;g)+\ol N\left(r,0;g^{m}+\frac{a(n-m)}{2n}\right)\\& &+\ol N(r,\infty;f)+\ol N(r,\infty;g)+\ol N_{*}(r,1;F,G)+\ol N_{0}(r,0;f^{'})+\ol N_{0}(r,0;g^{'})\\&&+S(r,f)+S(r,g),\eeas \\
where $\ol N_{0}(r,0;f^{'})$ is the reduced counting function of those zeros of $f^{'}$ which are not the zeros of $f(nf^{2m}+(n-m)af^{m}+b(n-2m))(F-1)$ and $\ol N_{0}(r,0;g^{'})$ is similarly defined.
\end{lem}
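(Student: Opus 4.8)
The plan is to estimate $N(r,\infty;H)$ by a careful pole-by-pole analysis of $H$, exploiting the structure of $F$ and $G$ in (\ref{e2.3}) together with the hypothesis $E_f(S,s)=E_g(S,s)$. Since $\gcd(m,n)=1$, $n>2m$ and $\frac{a^2}{4b}=\frac{n(n-2m)}{(n-m)^2}$, the derivative satisfies $F'=\frac{f^{n-2m-1}(nf^{2m}+a(n-m)f^m+b(n-2m))}{-c}f'$, and by (\ref{e2}) the quadratic in $f^m$ factors as $n\left(f^m+\frac{a(n-m)}{2n}\right)^2$; hence the zeros of $F'$ other than those of $f'$ are exactly the zeros of $f$ (of order $n-2m$ in $F$) and the zeros of $f^m+\frac{a(n-m)}{2n}$ (each contributing order $2$ in $F$, via a double root). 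The first step is therefore to write down the standard fact that poles of $H$ can only occur at (a) zeros and poles of $F-1$ and $G-1$, (b) zeros of $F'$ and $G'$, and (c) poles of $F$ and $G$, and that at a common point where $F$ and $G$ behave ``the same way'' the contribution cancels.

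Next I would go through the cases. At a zero of $F-1$ (equivalently of $G-1$, since $F,G$ share $(1,s)$): if it is a simple zero of both, $H$ is holomorphic there; the only surviving contribution comes from points where the multiplicities of the $1$-points of $F$ and $G$ differ, which is exactly $\ol N_*(r,1;F,G)$. At a pole of $F$: because $f$ has finitely many poles (so $\ol N(r,\infty;f)=S(r,f)$ in the relevant sense, or at worst $O(\log r)$), and similarly for $g$, these contribute $\ol N(r,\infty;f)+\ol N(r,\infty;g)$, which the statement keeps as an honest term. At a zero of $f$ that is not a $1$-point or pole: since $f$ has a zero of order $t$ there, $F$ has a zero of order $(n-2m)t$, and one checks $\frac{F''}{F'}-\frac{2F'}{F-1}$ has a simple pole, so this contributes at most $\ol N(r,0;f)$; likewise $\ol N(r,0;g)$. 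At a zero of $f^m+\frac{a(n-m)}{2n}$ which is not simultaneously a zero of $f'$ or a $1$-point: $F-1$ has a zero there but of the form coming from a critical point, so again a simple pole of $H$ at worst, giving $\ol N\left(r,0;f^m+\frac{a(n-m)}{2n}\right)$ and its $g$-analogue. Finally, the remaining zeros of $f'$ (resp. $g'$) that are not zeros of $f(nf^{2m}+a(n-m)f^m+b(n-2m))(F-1)$ are collected into the auxiliary terms $\ol N_0(r,0;f')$ and $\ol N_0(r,0;g')$ precisely by definition. Summing all these contributions yields the claimed bound, with the $S(r,f)+S(r,g)$ absorbing the logarithmic-derivative error terms that arise from writing $H$ as a combination of logarithmic derivatives.

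The main obstacle, and the step requiring the most care, is the local analysis at the zeros of $f'$ versus the zeros of $f$ and of $f^m+\frac{a(n-m)}{2n}$: one must verify that a point which is a multiple zero of $F-1$ coming from a critical value does \emph{not} produce a pole of $H$ of order exceeding $1$ — that is, the double-root structure guaranteed by $\frac{a^2}{4b}=\frac{n(n-2m)}{(n-m)^2}$ is exactly what prevents higher-order poles of $H$ there — and that the bookkeeping does not double-count a zero of $f'$ that is also a zero of $f$ or of the quadratic factor. Here the ``critically injective''/double-root phenomenon from (\ref{e2}) is essential: without it, $H$ could have poles of order $2$ at these points and the reduced counting bound would fail. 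Once the local orders are pinned down, the rest is the routine summation of reduced counting functions and the identification of leftover $f'$-zeros with $\ol N_0(r,0;f')$, so I expect no further difficulty beyond that case analysis.
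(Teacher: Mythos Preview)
Your approach is essentially the paper's: compute $F'$, use the condition $\frac{a^{2}}{4b}=\frac{n(n-2m)}{(n-m)^{2}}$ to factor the quadratic in $f^{m}$ as a perfect square, and then read off the possible poles of $H$ from its definition. The paper's proof is in fact much terser than yours --- after recording $F'=\frac{n f^{n-2m-1}\bigl(f^{m}+\frac{a(n-m)}{2n}\bigr)^{2}f'}{-c}$ it simply says ``clearly from the definition of $H$'' the bound follows --- so your case-by-case expansion is a legitimate fleshing-out of the same argument.

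Two small corrections to your write-up. First, at a zero of $f^{m}+\frac{a(n-m)}{2n}$ that is not a $1$-point, it is $F'$ (not $F-1$) that vanishes; indeed $F(z_{0})=\beta_{i}/c\neq 1$ by the hypothesis $c\neq\beta_{i}$, so $\frac{2F'}{F-1}$ is holomorphic there while $\frac{F''}{F'}$ has a simple pole. Second, the role of the perfect-square condition is not to prevent higher-order poles of $H$ --- logarithmic derivatives have only simple poles at zeros of any order --- but rather to collapse the zero set of $nf^{2m}+a(n-m)f^{m}+b(n-2m)$ to that of $f^{m}+\frac{a(n-m)}{2n}$, which is exactly the identity $\ol N\bigl(r,0;nf^{2m}+a(n-m)f^{m}+b(n-2m)\bigr)=\ol N\bigl(r,0;f^{m}+\frac{a(n-m)}{2n}\bigr)$ the paper isolates. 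With these two points adjusted your argument is complete and matches the paper's.
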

\begin{proof}Since $E_{f}(S,s)=E_{g}(S,s)$, clearly $F$ and $G$ share $(1,s)$.
	\par Again from (\ref{e2.3}) and from the condition $\frac{a^{2}}{4b}=\frac{n(n-2m)}{(n-m)^{2}}$ mentioned in { Theorem \ref{t1.1}} we get that \beas\label{e2.4}F^{'}=\frac{f^{n-2m-1}(nf^{2m}+(n-m)af^{m}+b(n-2m))f'}{-c}=\frac{nf^{n-2m-1}(f^{m}+\frac{a(n-m)}{2n})^2f^{'}}{-c},\eeas
	\beas\label{e2.4ext}G^{'}=\frac{g^{n-2m-1}(ng^{2m}+a(n-m)g^{m}+b(n-2m))g'}{-c}=\frac{ng^{n-2m-1}(g^{m}+\frac{a(n-m)}{2n})^2g^{'}}{-c}.\eeas
	 then $$\ol N(r,0;nf^{2m}+a(n-m)f^{m}+b(n-2m))=\ol N(r,0;f^{m}+\frac{a(n-m)}{2n}),$$ Similar result holds for $g$. Then clearly from the definition of $H$ we have \beas\; N(r,H)&\leq& \ol N(r,0;f)+\ol N\left(r,0;f^{m}+\frac{a(n-m)}{2n}\right)+\ol N(r,0;g)+\ol N\left(r,0;g^{m}+\frac{a(n-m)}{2n}\right)\\& &+\ol N(r,\infty;f)+\ol N(r,\infty;g)+\ol N_{*}(r,1;F,G)+\ol N_{0}(r,0;f^{'})+\ol N_{0}(r,0;g^{'})\\&&+S(r,f)+S(r,g).\eeas Hence the proof is complete.
\end{proof}
\begin{lem}\label{l2.5}
	Let $F$, $G$ be given by (\ref{e2.3}) and $E_{f}(S,0)=E_{g}(S,0)$, $E_{f}(S',\infty)=E_{g}(S', \infty)$, where $S$, $S'$ be given as in {\em  Theorem \ref{t1.1}}. Suppose $H\not\equiv 0$. Then for $\frac{a^2}{4b}=\frac{n(n-2m)}{(n-m)^{2}}$, we have \beas \; N(r,\infty;H)&\leq& 
	\chi_{n}\left(\ol N(r,0;f)+\ol N\left(r,0;f^{m}+\frac{a(n-m)}{2n}\right)\right)+\ol N(r,\infty;f)\\& &+\ol N(r,\infty;g)+\ol N_{*}(r,1;F,G)+\ol N_{0}(r,0;f^{'})+\ol N_{0}(r,0;g^{'}),\eeas \\
	where $\ol N_{0}(r,0;f^{'})$ is the reduced counting function of those zeros of $f^{'}$ which are not the zeros of $f(nf^{2m}+(n-m)af^{m}+b(n-2m))(F-1)$, $\ol N_{0}(r,0;g^{'})$ is similarly defined and $\chi_{n}=1$ when $n\not=2m+3$ and $\chi_{n}=0$ when $n=2m+3$.
\end{lem}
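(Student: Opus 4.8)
The plan is to run a direct local count of the poles of $H$; the point is to sharpen Lemma \ref{l2.4}, since the extra hypothesis $E_f(S',\infty)=E_g(S',\infty)$ says that $f$ and $g$ share $S'=\{0,c_1,\ldots,c_m\}$ (where $c_i^m=-\frac{a(n-m)}{2n}$) counting multiplicities, whereas Lemma \ref{l2.4} used only $E_f(S,s)=E_g(S,s)$. First I would note that each of the four summands of $H$ is a logarithmic derivative, so $H$ has only simple poles; hence $N(r,\infty;H)=\ol N(r,\infty;H)$ and it is enough to count, once each, the points that can be poles of $H$. Using $F'=-\frac{n}{c}f^{n-2m-1}\bigl(f^m+\frac{a(n-m)}{2n}\bigr)^2f'$ and the analogous formula for $G'$ (available because $\frac{a^2}{4b}=\frac{n(n-2m)}{(n-m)^2}$, as recorded before Lemma \ref{l2.4}), the only candidate poles of $H$ are: the zeros of $f$ and the $c_i$-points of $f$ (which, because $f$ and $g$ share $S'$ CM, form the same set of points as the zeros and $c_j$-points of $g$); the $1$-points of $F$ and $G$; the poles of $f$ and $g$; and the zeros of $f'$ and $g'$ not lying over $f\bigl(nf^{2m}+(n-m)af^m+b(n-2m)\bigr)(F-1)$, which are exactly the ones measured by $\ol N_0(r,0;f')$ and $\ol N_0(r,0;g')$.

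The crux is the bookkeeping at a point $z_0$ with $f(z_0)\in S'$. Since $f$ and $g$ share $S'$ CM, $g(z_0)\in S'$ with the same multiplicity $q$, so four configurations arise according as $f$ (resp.\ $g$) vanishes at $z_0$ or equals some $c_i$ there. Because $c\ne\beta_i$ for every $i$, at such a point $F\ne1$ and $G\ne1$, so $\frac{2F'}{F-1}$ and $\frac{2G'}{G-1}$ are holomorphic there and the residue of $H$ at $z_0$ equals the order of the zero of $F'$ at $z_0$ minus the order of the zero of $G'$ at $z_0$. Writing $\psi(w)=w^n+aw^{n-m}+bw^{n-2m}$, one has $\psi'(w)=n\,w^{n-2m-1}\bigl(w^m+\frac{a(n-m)}{2n}\bigr)^2$, so $\psi$ has a zero of order $n-2m$ at $0$ and attains each value $\psi(c_i)$ to order exactly $3$ at $c_i$; hence $F'$ has a zero of order $q(n-2m)-1$ over a zero of $f$ and of order $3q-1$ over a $c_i$-point of $f$, and likewise for $G'$. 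Consequently the residue of $H$ at $z_0$ is $0$ whenever $f$ and $g$ behave alike at $z_0$ (both vanish, or both take a $c$-value), and it is $\pm q(n-2m-3)$ in the two mixed configurations. Therefore, when $n=2m+3$ no point at which $f\in S'$ is a pole of $H$, while when $n\ne2m+3$ the only such poles are the mixed ones, of which at most $\ol N(r,0;f)$ have $f$ vanishing and $g$ equal to some $c_j$, and at most $\ol N\bigl(r,0;f^m+\frac{a(n-m)}{2n}\bigr)$ have $f$ equal to some $c_i$ and $g$ vanishing. This is what produces the factor $\chi_n$, and it is the only genuinely delicate step.

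The remaining cases are standard. At a common $1$-point of $F$ and $G$ of equal multiplicity the simple poles of the $F$- and $G$-parts of $H$ cancel, so such points contribute nothing, while $1$-points of unequal multiplicity contribute at most $\ol N_*(r,1;F,G)$. At a pole of $f$ of order $p$ the $F$-part of $H$ has residue $pn-1\ne0$, and similarly for $g$, so poles contribute at most $\ol N(r,\infty;f)+\ol N(r,\infty;g)$. Finally, a zero of $f'$ not lying over $f\bigl(nf^{2m}+(n-m)af^m+b(n-2m)\bigr)(F-1)$ is a regular point of $F$ with value $\ne1$ at which $F'$ has a zero of the same order, so $H$ has at most a simple pole there; summing over $f$ and $g$ gives $\ol N_0(r,0;f')+\ol N_0(r,0;g')$. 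Adding the contributions of these four groups of points yields the asserted inequality, with $\chi_n=1$ for $n\ne2m+3$ and $\chi_n=0$ for $n=2m+3$.
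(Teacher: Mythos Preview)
Your proof is correct and complete; the paper itself omits the argument entirely, referring instead to Lemma~2.2 of \cite{Ban-Mallick-bohemica}, and your local residue analysis is precisely the argument one finds there. In particular, you correctly isolate the mechanism behind the factor $\chi_n$: because $f$ and $g$ share $S'$ CM, at any $S'$-point the orders of vanishing of $F'$ and $G'$ are either both $q(n-2m)-1$, both $3q-1$, or one of each, and in the mixed case the residue $\pm q(n-2m-3)$ vanishes exactly when $n=2m+3$.
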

\begin{proof}
	We omit this proof since it can be easily obtained from the proof of {\it{Lemma 2.2}} \cite{Ban-Mallick-bohemica}.
\end{proof}
\begin{lem}\label{l2.6}
	Let $S$, $S'$ be defined as in {\em Theorem \ref{t1.1}} and $F$, $G$ be given by (\ref{e2.3}). Suppose for two non-constant meromorphic functions $f$ and $g$, $E_{f}(S,0)=E_{g}(S,0)$, $E_{f}(S',\infty)=E_{g}(S',\infty)$,  and $\Phi\not\equiv 0$. Then
	 for $\frac{a^2}{4b}=\frac{n(n-2m)}{(n-m)^{2}}$ with $n\geq 2m+3$, we have
	\beas  && \ol N\left(r,0;f\right)+\ol N\left(r,0;f^{m}+\frac{a(n-m)}{2n}\right)\nonumber\\&\leq&\frac{1}{2}\left(\ol N_{*}(r,1;F,G)+\ol N(r,\infty;f)+\ol N(r,\infty;g)\right)+S(r,f)+S(r,g).\eeas 
\end{lem}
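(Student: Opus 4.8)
The plan is to estimate the zeros of $f$ and of $f^{m}+\tfrac{a(n-m)}{2n}$ simultaneously by exploiting the factorization of $F'$ coming from the hypothesis $\tfrac{a^2}{4b}=\tfrac{n(n-2m)}{(n-m)^{2}}$. Recall from the computation in the proof of Lemma \ref{l2.4} that
\[
F'=\frac{n\,f^{n-2m-1}\left(f^{m}+\frac{a(n-m)}{2n}\right)^{2}f'}{-c},\qquad
G'=\frac{n\,g^{n-2m-1}\left(g^{m}+\frac{a(n-m)}{2n}\right)^{2}g'}{-c}.
\]
So every zero $z_0$ of $f$ is a zero of $F'$ of order at least $n-2m-1\ge 2$ (using $n\ge 2m+3$), and every zero of $f^{m}+\tfrac{a(n-m)}{2n}$ is a zero of $F'$ of order at least $2$. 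The first step is therefore to observe that such points are \emph{not} zeros of $F-1$, because $S'$ and $S$ are disjoint in the sense that $c\ne\beta_i$ forces $F\ne 1$ at the zeros of $f$ and at the zeros of $f^{m}+\tfrac{a(n-m)}{2n}$ — here one writes $F-1 = \tfrac{-1}{c}(f-z_1)\cdots(f-z_n)$ over the roots $z_j\in S$, and a zero of $f$ or of $f^{m}+\tfrac{a(n-m)}{2n}$ would have to be one of these $z_j$, contradicting $c\ne\beta_i$ (resp. the definition of the $c_i$).

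The second, and main, step is to play off the two representations of $\Phi$. Since $E_f(S,0)=E_g(S,0)$, $F$ and $G$ share $1$ IM, so the common $1$-points of $F$ and $G$ that are simple for both are not poles of $\Phi$; hence $N(r,\infty;\Phi)$ is controlled by $\ol N_{*}(r,1;F,G)$ together with the poles of $f$ and $g$ (the poles of $F,G$), giving
\[
N(r,\infty;\Phi)\le \ol N_{*}(r,1;F,G)+\ol N(r,\infty;f)+\ol N(r,\infty;g)+S(r,f)+S(r,g).
\]
On the other hand, at a zero $z_0$ of $f$ of multiplicity $p$ (which is not a zero of $F-1$ and, since $E_f(S',\infty)=E_g(S',\infty)$, is a zero of $g$ of the same multiplicity $p$), $\Phi=\tfrac{F'}{F-1}-\tfrac{G'}{G-1}$ has a zero: because $F'$ vanishes there to order $(n-2m-1)p+(p-1)=(n-2m)p-1$ and $F-1$ is nonzero, $\tfrac{F'}{F-1}$ vanishes to order $(n-2m)p-1\ge 1$; similarly for $G$; so $\Phi$ vanishes at $z_0$. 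The same reasoning applies at a zero of $f^{m}+\tfrac{a(n-m)}{2n}$ (which, by the CM sharing of $S'$, is a zero of $g^{m}+\tfrac{a(n-m)}{2n}$ of the same multiplicity). Thus every point counted in $\ol N(r,0;f)+\ol N\big(r,0;f^{m}+\tfrac{a(n-m)}{2n}\big)$ is a zero of $\Phi$, and since $\Phi\not\equiv 0$,
\[
\ol N(r,0;f)+\ol N\!\left(r,0;f^{m}+\tfrac{a(n-m)}{2n}\right)\le N(r,0;\Phi)\le N(r,\infty;\Phi)+S(r,f)+S(r,g).
\]
Combining the last two displays gives the asserted bound, but with the factor $1$ rather than $\tfrac12$ in front of $\ol N_{*}$, $\ol N(r,\infty;f)$, $\ol N(r,\infty;g)$; to recover the $\tfrac12$ one notes that at these zeros of $\Phi$ the multiplicity is actually $\ge$ twice what is being counted on the left (the factor $f^{m}+\tfrac{a(n-m)}{2n}$ is squared and $n-2m-1\ge 2$), so in the first moment-type estimate $N(r,0;\Phi)\ge 2\big(\ol N(r,0;f)+\ol N(r,0;f^m+\cdots)\big)$ after a more careful multiplicity bookkeeping, which halves the right-hand side.

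The step I expect to be the real obstacle is this last multiplicity bookkeeping: one must show that the contributions from $\ol N(r,0;f)$ and $\ol N\big(r,0;f^m+\tfrac{a(n-m)}{2n}\big)$ enter $N(r,0;\Phi)$ with multiplicity at least $2$ each, \emph{uniformly}, so that dividing by $2$ is legitimate — this uses $n-2m-1\ge 2$ (i.e. $n\ge 2m+3$), which is exactly why the hypothesis on $n$ appears, and it requires checking that the zeros of $f$ and the zeros of $f^m+\tfrac{a(n-m)}{2n}$ are disjoint (guaranteed since $b\ne 0$, so $0$ is not a root of $z^{m}+\tfrac{a(n-m)}{2n}=0$), so no cancellation of orders occurs. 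Once that is in place the arithmetic is routine and the lemma follows; an appeal to Lemma \ref{l2.3} or to the structure already recorded in Lemma \ref{l2.4}/\ref{l2.5} can be used to package the $\ol N_{*}$ term cleanly.
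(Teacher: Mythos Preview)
Your approach is correct and is precisely the standard argument to which the paper defers (Lemma 2.5 of \cite{Ban-Mallick-bohemica}): bound $N(r,\infty;\Phi)$ by $\ol N_{*}(r,1;F,G)+\ol N(r,\infty;f)+\ol N(r,\infty;g)$, observe that every $S'$-point of $f$ is a zero of $\Phi$ of order at least $2$ thanks to the factorization of $F'$ and the CM sharing of $S'$, and conclude via $2\bigl(\ol N(r,0;f)+\ol N(r,0;f^{m}+\tfrac{a(n-m)}{2n})\bigr)\le N(r,0;\Phi)\le N(r,\infty;\Phi)+S(r,f)+S(r,g)$. The hypothesis $n\ge 2m+3$ is used exactly where you locate it, to force the zero of $F'/(F-1)$ at a simple zero of $f$ to have order $n-2m-1\ge 2$.

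One small imprecision worth fixing: CM sharing of the \emph{set} $S'$ does not guarantee that a zero of $f$ is a zero of $g$, only that $g(z_0)\in S'$ with the same multiplicity. This does not harm the argument, since whether $g(z_0)=0$ or $g(z_0)=c_j$, the factor $g^{\,n-2m-1}\bigl(g^{m}+\tfrac{a(n-m)}{2n}\bigr)^{2}$ in $G'$ vanishes to order at least $\min\{(n-2m)p-1,\,3p-1\}\ge 2$, so $\Phi$ still has a zero of order $\ge 2$ there. With that adjustment your multiplicity bookkeeping is complete; the closing remark about invoking Lemma \ref{l2.3} is unnecessary.
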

\begin{proof}
	We omit this proof since it can be easily obtained from the proof of {\it{Lemma 2.5}} \cite{Ban-Mallick-bohemica}.
\end{proof}
\begin{lem}\label{l2.7}\cite{Ban-Mallick-cmft}
	Let $\phi(z)=a^2(z^{n-m}-A)^2-4b(z^{n-2m}-A)(z^n-A)$, where $A,a,b\in\mathbb{C^{*}}$, $\frac{a^2}{4b}=\frac{n(n-2m)}{(n-m)^{2}}$, $\gcd(m,n)=1$, $n>2m$. If $\omega^l$ is the m-th root of unity for $l=0,1,\ldots,m-1$, then  \\i) $\phi(z)$ has no multiple zero, when $A\neq\omega^l$. \\ii) $\phi(z)$ has exactly one multiple zero, when $A=\omega^l$ and that is of multiplicity 4.\\ In particular, when $A=1$, then the multiple zero is 1. 
\end{lem}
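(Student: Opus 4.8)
The plan is to reduce the whole statement to a single Taylor expansion at one point, after first pinning down where a multiple zero can live. I begin with the algebraic identity that makes the coefficient hypothesis work: a direct expansion gives $(z^{n-m}-A)^2-(z^{n-2m}-A)(z^{n}-A)=A\,z^{n-2m}(z^{m}-1)^2$, whence $\phi(z)=(a^2-4b)(z^{n-m}-A)^2+4bA\,z^{n-2m}(z^{m}-1)^2$. Since $n(n-2m)-(n-m)^2=-m^2$, the hypothesis $\frac{a^2}{4b}=\frac{n(n-2m)}{(n-m)^2}$ is exactly $a^2-4b=-\frac{4bm^2}{(n-m)^2}$, so that $\phi(z)=\frac{4b}{(n-m)^2}\,\Psi(z)$, where $\Psi(z)=(n-m)^2A\,z^{n-2m}(z^{m}-1)^2-m^2(z^{n-m}-A)^2$. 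As $4b/(n-m)^2\neq 0$, the zeros of $\phi$ and of $\Psi$ coincide with multiplicities, and $\Psi(0)=-m^2A^2\neq 0$, so no zero sits at the origin. I may therefore work with $\Psi$ and with $z_0\neq 0$ throughout.

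Next I would localize the multiple zeros. Writing $t=z_0^{m}$ and $u=z_0^{n-2m}$ (so $z_0^{n-m}=ut$), the conditions $\Psi(z_0)=0$ and $\Psi'(z_0)=0$ become two polynomial relations in $t,u$; here $\Psi'$ factors as $z^{n-2m-1}$ times a bracketed factor, and dropping $z^{n-2m-1}$ is legitimate because $z_0\neq 0$. Eliminating $u$ between the two relations --- permissible since $t\neq 0$, $A\neq 0$, and under the provisional assumption $t\neq 1$ --- and setting $w=nt-(n-2m)$, the system collapses, after cancelling the factor $n-2m\neq 0$, to $(w-2m)^2=0$, i.e. $w=2m$, i.e. $t=1$, contradicting $t\neq 1$. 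Hence every multiple zero satisfies $z_0^{m}=1$, and substituting $t=1$ back into $\Psi(z_0)=0$ forces $z_0^{n-2m}=A$.

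This dichotomy settles the root-of-unity alternative. If $z_0^{m}=1$ then $z_0^{n-2m}$ is automatically an $m$-th root of unity, so $z_0^{n-2m}=A$ can hold only when $A$ is an $m$-th root of unity; this already proves part (i), namely that $\phi$ has no multiple zero when $A\neq\omega^{l}$. When $A$ is an $m$-th root of unity, the map $l\mapsto l(n-2m)\bmod m$ is a bijection of $\mathbb{Z}/m\mathbb{Z}$ because $\gcd(n-2m,m)=\gcd(n,m)=1$, so exactly one $z_0$ with $z_0^{m}=1$ satisfies $z_0^{n-2m}=A$; thus there is at most one multiple zero.

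Finally I would confirm that this candidate is genuinely a quadruple zero. That $\Psi(z_0)=\Psi'(z_0)=0$ is immediate once $z_0^{m}=1$ and $z_0^{n-2m}=A$, since both bracketed factors vanish, so $z_0$ is indeed multiple. For the exact order I scale: putting $z=z_0y$ and using $z_0^{m}=1$, $z_0^{n-2m}=z_0^{n-m}=A$ gives $\Psi(z_0y)=A^2\,\Xi(y)$ with $\Xi(y)=(n-m)^2y^{n-2m}(y^{m}-1)^2-m^2(y^{n-m}-1)^2=\Psi|_{A=1}(y)$; as $A^2\neq 0$, the order of $\Psi$ at $z_0$ equals the order of $\Xi$ at $y=1$, which in particular shows that for $A=1$ the multiple zero is $z=1$. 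Expanding $\Xi$ about $y=1$ with $x=y-1$, the relation $n-2m=(n-m)-m$ forces the coefficients of $x^{2}$ and $x^{3}$ to cancel, while the coefficient of $x^{4}$ evaluates to $m^2(n-m)^2\bigl(-\tfrac{n(n-2m)}{12}\bigr)$, which is nonzero since $n>2m$. Hence $y=1$, and therefore $z_0$, is a zero of exact multiplicity $4$, completing part (ii). I expect this last $x^{4}$ computation to be the main obstacle: it is elementary but the vanishing of the quadratic and cubic terms is delicate and rests squarely on the coefficient normalization, so the bookkeeping must be handled carefully.
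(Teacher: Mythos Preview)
The paper does not prove this lemma at all: it is quoted verbatim from \cite{Ban-Mallick-cmft} with no argument given, so there is no in-paper proof to compare against. What matters, then, is whether your argument stands on its own, and it does.

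Your key identity $(z^{n-m}-A)^{2}-(z^{n-2m}-A)(z^{n}-A)=A\,z^{n-2m}(z^{m}-1)^{2}$ is correct, and combined with $a^{2}-4b=-4bm^{2}/(n-m)^{2}$ it reduces everything to $\Psi(z)=(n-m)^{2}A\,z^{n-2m}(z^{m}-1)^{2}-m^{2}(z^{n-m}-A)^{2}$, which indeed has $\Psi(0)\neq 0$. The elimination step is right: setting $t=z_{0}^{m}$, $u=z_{0}^{n-2m}$, the system $\Psi=\Psi'=0$ with $t\neq 1$ yields, after substituting $ut-A$ from the derivative equation into the value equation and changing to $w=nt-(n-2m)$, exactly $(n-2m)(w-2m)^{2}=0$, forcing $t=1$ and hence $u=A$. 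This immediately gives part (i) and the uniqueness of the candidate in part (ii) via the bijection $l\mapsto l(n-2m)\bmod m$. Your scaling $z=z_{0}y$ to reduce to $A=1$ is clean, and the Taylor computation at $y=1$ checks out: writing $p=n-m$, $q=m$, the $x^{2}$ and $x^{3}$ coefficients of $\Xi(1+x)$ cancel as you say, and the $x^{4}$ coefficient works out to $-p^{2}q^{2}(p^{2}-q^{2})/12=-m^{2}(n-m)^{2}n(n-2m)/12\neq 0$, confirming multiplicity exactly $4$. Small test cases ($m=1$, $n=3$ gives $\Xi(y)=-(y-1)^{4}$; $m=1$, $n=4$ gives $\Xi(y)=-(y-1)^{4}(y^{2}+4y+1)$) match your formula. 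The proof is complete and self-contained.
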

\begin{lem}\label{l2.8}\cite{Ban-Mallick-cmft}
	Let $P(z)=z^{n}+az^{n-m}+bz^{n-2m}+c$, where $a, b\in\mathbb{C^{*}}$.  
	Then the followings hold.\\
	i) $\beta_i$'s are non-zero if $a^2\neq4b$. \\
	ii) $P(z)$ is critically injective polynomial if $\frac{a^2}{4b}=\frac{n(n-2m)}{(n-m)^2}$.
\end{lem}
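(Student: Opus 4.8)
The plan is to exploit the factorization of $P'(z)$ to locate the critical points explicitly and then recast both assertions as elementary statements about the numbers $\beta_i$. First I would compute
$$P'(z)=z^{n-2m-1}\big(nz^{2m}+a(n-m)z^{m}+b(n-2m)\big),$$
so that the critical points of $P$ are $z=0$ together with the roots $c_i$ of equation (\ref{e1}). Since $P(0)=c$ and, directly from the definition of $\beta_i$, $P(c_i)=c-\beta_i$, establishing critical injectivity reduces to the two claims $\beta_i\neq 0$ for all $i$ and $\beta_i\neq\beta_j$ for $i\neq j$.

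For part (i), I would first observe that $c_i\neq 0$, since $c_i=0$ would give $b(n-2m)=0$, which is impossible as $b\in\mathbb{C}^{*}$ and $n>2m$. Factoring $c_i^{n}+ac_i^{n-m}+bc_i^{n-2m}=c_i^{n-2m}\big(c_i^{2m}+ac_i^{m}+b\big)$ shows $\beta_i=0$ is equivalent to $c_i^{2m}+ac_i^{m}+b=0$. Writing $w=c_i^{m}$, the fact that $c_i$ solves (\ref{e1}) gives $nw^{2}+a(n-m)w+b(n-2m)=0$, while $\beta_i=0$ gives $w^{2}+aw+b=0$. Eliminating the quadratic term between these two relations forces $aw+2b=0$, and substituting $w=-2b/a$ back into $w^{2}+aw+b=0$ yields exactly $a^{2}=4b$. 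The contrapositive is precisely assertion (i).

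For part (ii), the hypothesis $\frac{a^{2}}{4b}=\frac{n(n-2m)}{(n-m)^{2}}$ first guarantees $a^{2}\neq 4b$, since equality would force $m=0$; hence part (i) already yields $\beta_i\neq 0$, that is $P(0)\neq P(c_i)$. Under this hypothesis equation (\ref{e1}) degenerates to $n\big(z^{m}+\frac{a(n-m)}{2n}\big)^{2}=0$, so every critical root satisfies $c_i^{m}=\gamma$ with $\gamma=-\frac{a(n-m)}{2n}$ fixed and common to all of them. This lets me write $\beta_i=-c_i^{n-2m}(\gamma^{2}+a\gamma+b)=-Kc_i^{n-2m}$, where $K$ is independent of $i$ and $K\neq 0$ because $\beta_i\neq 0$. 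Consequently $\beta_i=\beta_j$ is equivalent to $c_i^{n-2m}=c_j^{n-2m}$.

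The crux, and the step I expect to hinge on the coprimality hypothesis, is ruling out $c_i^{n-2m}=c_j^{n-2m}$ for $i\neq j$. As the $c_i$ are distinct $m$-th roots of the same number $\gamma$, their ratio is a nontrivial $m$-th root of unity, say $c_i/c_j=\zeta^{k}$ with $1\le k\le m-1$. Then $c_i^{n-2m}=c_j^{n-2m}$ would force $m\mid k(n-2m)$; but $\gcd(m,n-2m)=\gcd(m,n)=1$, so $n-2m$ is invertible modulo $m$ and this would force $m\mid k$, contradicting $1\le k\le m-1$. Hence $\beta_i\neq\beta_j$, and combined with $\beta_i\neq 0$ this shows $P$ takes pairwise distinct values at its distinct critical points, i.e. $P$ is critically injective.
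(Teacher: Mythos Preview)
Your argument is correct. The paper does not actually prove this lemma; it is quoted from \cite{Ban-Mallick-cmft} and no proof is reproduced here, so there is nothing to compare at the level of technique. What you have written is a valid self-contained verification: the elimination between $nw^{2}+a(n-m)w+b(n-2m)=0$ and $w^{2}+aw+b=0$ leading to $a^{2}=4b$ is clean, and the reduction of critical injectivity to $c_i^{n-2m}\neq c_j^{n-2m}$ via the common value $c_i^{m}=\gamma$ together with $\gcd(m,n-2m)=\gcd(m,n)=1$ is exactly the right use of the coprimality hypothesis. One small point worth making explicit for completeness is that $\gamma=-\tfrac{a(n-m)}{2n}\neq 0$ (since $a\in\mathbb{C}^{*}$), which guarantees the $c_i$ are genuinely the $m$ distinct $m$-th roots of a nonzero number and hence the ratio $c_i/c_j$ is a well-defined nontrivial $m$-th root of unity; you implicitly use this, and it follows from your earlier observation that $c_i\neq 0$.
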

\begin{lem}\label{l2.9}
	Let $F$, $G$ be given by (\ref{e2.3}),  $E_{f}(S,s)=E_{g}(S,s)$, where $S$ is defined as in {\em {Theorem \ref{t1.1}}}. Then $$ \ol N_{L}(r,1;F)\leq \frac{1}{s+1}\big(\ol N(r,0;f)+\ol N(r,\infty;f)\big)+S(r,f).$$ Similar inequality holds for $G$.
\end{lem}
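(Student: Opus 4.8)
The plan is to combine the weighted sharing hypothesis with the explicit factorisation of $F-1$. Since $E_{f}(S,s)=E_{g}(S,s)$, the functions $F$ and $G$ share $(1,s)$, and a direct computation from (\ref{e2.3}) gives $F-1=\frac{P(f)}{-c}$, where $P(z)=z^{n}+az^{n-m}+bz^{n-2m}+c$; hence the $1$-points of $F$ are exactly the points at which $f$ assumes a value in $S$. Since $P(0)=c\neq 0$ and $P(c_{i})=c-\beta_{i}\neq 0$ by the hypothesis $c\neq\beta_{i}$, we have $S\cap S'=\emptyset$, so for every $w\in S$ one has $w\neq 0$ and $w^{m}+\tfrac{a(n-m)}{2n}\neq 0$. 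Consequently, if $z_{0}$ is a $1$-point of $F$ of multiplicity $p$, then $f(z_{0})\in S$, so $z_{0}$ is neither a zero nor a pole of $f$ and the factors $f^{n-2m-1}$ and $\bigl(f^{m}+\tfrac{a(n-m)}{2n}\bigr)^{2}$ of the expression $F'=\frac{nf^{n-2m-1}(f^{m}+\frac{a(n-m)}{2n})^{2}f'}{-c}$ (computed in the proof of Lemma \ref{l2.4}) do not vanish at $z_{0}$; since $F'=(F-1)'$ has a zero of order $p-1$ at $z_{0}$, it follows that $f'$ vanishes there to order exactly $p-1$.

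The second step is to pin down the multiplicities at a point $z_{0}$ contributing to $\overline{N}_{L}(r,1;F)$. There $F-1$ has a zero of order $p$ and $G-1$ a zero of order $q$ with $p>q\geq 1$. Because $F$ and $G$ share $(1,s)$, the numbers $\min(p,s+1)$ and $\min(q,s+1)$ must coincide; an elementary case check rules out $q\leq s$, so $q\geq s+1$ and hence $p\geq q+1\geq s+2$. Combining with the first step, every point counted by $\overline{N}_{L}(r,1;F)$ is a zero of $f'$ of multiplicity $p-1\geq s+1$ lying outside both the zero set and the pole set of $f$.

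Finally I would pass to counting functions. Let $N_{\circ}(r,0;f')$ denote the counting function, with multiplicities, of the zeros of $f'$ that occur at points where $f\neq 0,\infty$. The second step gives $(s+1)\,\overline{N}_{L}(r,1;F)\leq N_{\circ}(r,0;f')$. On the other hand the zeros and poles of $f$ contribute only simple poles to $f'/f$, so $N_{\circ}(r,0;f')=N\bigl(r,0;f'/f\bigr)\leq T\bigl(r,f'/f\bigr)+O(1)=\overline{N}(r,0;f)+\overline{N}(r,\infty;f)+m\bigl(r,f'/f\bigr)+O(1)=\overline{N}(r,0;f)+\overline{N}(r,\infty;f)+S(r,f)$ by the lemma on the logarithmic derivative. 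Dividing by $s+1$ yields the asserted inequality, and the inequality for $G$ follows by interchanging the roles of $f$ and $g$.

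The only genuinely delicate point is the combinatorial step $p\geq s+2$: it is here that the weighted sharing $E_{f}(S,s)=E_{g}(S,s)$ is used at full strength, and it is precisely the gain of one extra unit of multiplicity --- the improvement from $p\geq s+1$ to $p\geq s+2$ --- that produces the denominator $s+1$ rather than the cruder $s$. Everything else (the factorisation $F-1=P(f)/(-c)$, the fact $S\cap S'=\emptyset$, which is where the hypotheses on $a$, $b$, $c$ enter, and the standard bound $N(r,0;f'/f)\leq\overline{N}(r,0;f)+\overline{N}(r,\infty;f)+S(r,f)$) is routine.
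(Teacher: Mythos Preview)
Your proof is correct and follows essentially the same route as the paper's: both arguments observe that $F,G$ share $(1,s)$, that the roots of $P$ avoid $S'$ (equivalently, $P$ has only simple zeros, which is exactly what the paper invokes), so that at any $1$-point of $F$ the extra factors of $F'$ are nonvanishing and the zero of $F'$ coincides with that of $f'$; both then use the weighted sharing to force $p\geq s+2$, giving zeros of $f'$ of multiplicity $\geq s+1$ away from the zeros and poles of $f$, and finish via $N(r,0;f'/f)\leq \overline N(r,0;f)+\overline N(r,\infty;f)+S(r,f)$. Your write-up is simply more explicit about the verification $S\cap S'=\emptyset$ and the case analysis yielding $p\geq s+2$, while the paper records the same chain of inequalities more tersely (and retains the nonnegative term $N_{o}(r,0;f')$ which you drop, but that extra precision is not needed for the stated bound).
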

\begin{proof}
Since $E_f(S,s)=E_g(S,s)$, clearly $F$ and $G$ share $(1,s)$. From the choice of $c$, it is clear that the polynomial $P(z)=:z^{n}+az^{n-m}+bz^{n-2m}+c$ has no multiple zero, so we have \beas \ol N_{L}(r,1;F)&\leq& \ol N(r,1;F\mid\geq s+2)\\&\leq& \ol N(r,0;F'\mid\geq s+1;F=1)\\&\leq&\frac{1}{s+1}N(r,0;F'\mid\geq s+1;F=1)\\&\leq&\frac{1}{s+1}\big(N(r,0;f'\mid f\not=0)-N_{o}(r,0;f') \big)\\&\leq&\frac{1}{s+1}\big( N(r,0;\frac{f'}{f})-N_{o}(r,0;f') \big)\\&\leq&\frac{1}{s+1}\big(\ol N(r,\infty;f)+\ol N(r,0;f)-N_{o}(r,0;f') \big)+S(r,f)\\&\leq&\frac{1}{s+1}\big(\ol N(r,0;f)+\ol N(r,\infty;f)-N_{o}(r,0;f') \big)+S(r,f).\eeas  Here $N_{o}(r,0;f')=N(r,0;f'\mid f\not=0,\alpha_{1},\alpha_{2},\ldots,\alpha_{n})$, where $\alpha_{1},\alpha_{2},\ldots,\alpha_{n}$ are zeros of the polynomial $P(z)$.
\end{proof}
\begin{lem}\label{l2.10}
		Let $F$, $G$ be given by (\ref{e2.3}) and $\Phi\not=0$. Also let $E_{f}(S,s)=E_{g}(S,s)$, where $S$ is defined as in {\em {Theorem \ref{t1.1}}}, and $f$ and $g$ share $(0,0)$ then,
		\beas \ol N(r,0;f)=\ol N(r,0;g)&\leq&\frac{1}{n-2m-1}\left(\ol N_L(r,1;F)+\ol N_L(r,1;G)+\ol N(r,\infty;F)+\ol N(r,\infty;G)\right)\\&&+S(r,F)+S(r,G).\eeas
	\end{lem}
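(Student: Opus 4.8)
The plan is to show that every common zero of $f$ and $g$ is a zero of $\Phi$ of multiplicity at least $n-2m-1$, and then to estimate $N(r,0;\Phi)$ from above by the (simple) poles of $\Phi$ plus a small term. First I would note that $E_{f}(S,s)=E_{g}(S,s)$ forces $F$ and $G$ to share $(1,s)$, and that by (\ref{e2.3}) one has $F-1=P(f)/(-c)$ with $P(z)=z^{n}+az^{n-m}+bz^{n-2m}+c$, so that a $1$-point of $F$ is precisely a point where $f$ takes a value in $S$.

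Next I would analyse the behaviour at the zeros of $f$. If $z_{0}$ is a zero of $f$ of multiplicity $p\geq 1$, then since $f^{2m}+af^{m}+b\to b\neq 0$ at $z_{0}$ and $n>2m$, the point $z_{0}$ is a zero of $F$ of multiplicity $(n-2m)p$; as $F(z_{0})=0\neq 1$, it is a zero of $F'$, and hence of $F'/(F-1)$, of multiplicity exactly $(n-2m)p-1\geq n-2m-1$. Because $f$ and $g$ share $(0,0)$, this $z_{0}$ is also a zero of $g$, of some multiplicity $q\geq 1$, and the identical argument shows that $G'/(G-1)$ vanishes at $z_{0}$ to order $(n-2m)q-1\geq n-2m-1$. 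Hence $\Phi=F'/(F-1)-G'/(G-1)$ has a zero at $z_{0}$ of multiplicity at least $n-2m-1$, which yields
\[(n-2m-1)\,\ol N(r,0;f)\leq N(r,0;\Phi).\]

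Finally I would bound $N(r,0;\Phi)$ from above. By the first fundamental theorem, $N(r,0;\Phi)\leq m(r,\Phi)+N(r,\infty;\Phi)+O(1)$, and since $\Phi$ is the difference of the logarithmic derivatives of $F-1$ and $G-1$, the lemma on logarithmic derivatives gives $m(r,\Phi)=S(r,F)+S(r,G)$. All poles of $\Phi$ are simple and can occur only at poles of $F$, at poles of $G$, or at common $1$-points of $F$ and $G$; at a common $1$-point where $F$ and $G$ have the same multiplicity the principal parts of $F'/(F-1)$ and $G'/(G-1)$ coincide and cancel, so no pole of $\Phi$ arises there, while the remaining common $1$-points are exactly those counted by $\ol N_{L}(r,1;F)+\ol N_{L}(r,1;G)$. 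Thus $N(r,\infty;\Phi)\leq\ol N(r,\infty;F)+\ol N(r,\infty;G)+\ol N_{L}(r,1;F)+\ol N_{L}(r,1;G)$, and combining this with the previous display and dividing by $n-2m-1$ gives the asserted estimate; the equality $\ol N(r,0;f)=\ol N(r,0;g)$ is immediate from $f$ and $g$ sharing $(0,0)$. The one step needing care---the main obstacle---is verifying that the residues of the two logarithmic derivatives cancel at every shared $1$-point, so that the pole contribution of $\Phi$ is kept down to $\ol N_{L}$; the multiplicity bookkeeping at the zeros of $f$ and $g$ is then routine, and the hypothesis $\Phi\not\equiv 0$ is precisely what makes $N(r,0;\Phi)$ a legitimate finite quantity to estimate.
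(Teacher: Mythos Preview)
Your proposal is correct and follows essentially the same route as the paper's proof: the paper records the chain $\ol N(r,0;f)\leq\frac{1}{n-2m-1}N(r,0;\Phi)\leq\frac{1}{n-2m-1}T(r,\Phi)+O(1)\leq\frac{1}{n-2m-1}N(r,\infty;\Phi)+S(r,F)+S(r,G)$ and then bounds $N(r,\infty;\Phi)$ by $\ol N_{*}(r,1;F,G)+\ol N(r,\infty;F)+\ol N(r,\infty;G)$, which is exactly your argument with the details (multiplicity bookkeeping at the common zeros of $f,g$ and residue cancellation at equal-multiplicity $1$-points) made explicit.
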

\begin{proof}
	Since $f$, $g$ share $(0,0)$, it follows that \beas\ol N(r,0;f)=\ol N(r,0;g) &\leq& \frac{1}{n-2m-1}N(r,0;\Phi)\\&\leq& \frac{1}{n-2m-1}T(r,\Phi)+O(1)\\&\leq& \frac{1}{n-2m-1}N(r,\infty;\Phi)+S(r,F)+S(r,G)\\&\leq& \frac{1}{n-2m-1}\big(\ol N_{*}(r,1;F,G)+\ol N(r,\infty;F)+\ol N(r,\infty;G)\big)\\&&+S(r,F)+S(r,G)
	.\eeas
\end{proof}
\begin{lem}\label{l2.11}
	Let $f$ be a meromorphic function having finitely many poles in $\mathbb{C}$ and $S$  be defined as in {\it{Theorem \ref{t1.1} }}. If $f$ and a non constant $L$-function $\mathcal{L}$ share the set $S\;$ IM, then $\rho(f) =\rho(\mathcal{L}) = 1.$
\end{lem}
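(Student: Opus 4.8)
The plan is to convert the set-sharing hypothesis into ordinary $1$-point sharing for the auxiliary functions of (\ref{e2.3}), then to run the second fundamental theorem once on $f$ and once on $\mathcal{L}$ so as to trap each of $T(r,f)$, $T(r,\mathcal{L})$ within a constant multiple of the other, and finally to quote the classical fact that every $L$-function has order of growth exactly one. The only genuinely essential (if short) step will be to exclude a rational $f$ at the outset, since for a rational $f$ the assertion $\rho(f)=1$ is simply false; everything else is a bookkeeping exercise with the two families of small terms $S(r,f)$ and $S(r,\mathcal{L})$.

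First I would record the dictionary. Write $P(z)=z^{n}+az^{n-m}+bz^{n-2m}+c$, so that $S=\{z:P(z)=0\}$. A multiple zero of $P$ can occur only at a zero of $P'$, i.e.\ only if $c=\beta_i$ for some $i$; since $c\neq\beta_i$ by the hypothesis of {\em Theorem \ref{t1.1}} and the $\beta_i$ are non-zero by Lemma \ref{l2.8}(i), the polynomial $P$ has $n$ distinct simple zeros $\alpha_1,\dots,\alpha_n$. With $F,G$ given by (\ref{e2.3}) and $g=\mathcal{L}$ one has $F-1=P(f)/(-c)$ and $G-1=P(\mathcal{L})/(-c)$, so the set of points at which $f$ (resp.\ $\mathcal{L}$) assumes a value of $S$ is exactly the $1$-point set of $F$ (resp.\ $G$); hence $E_f(S,0)=E_{\mathcal{L}}(S,0)$ says precisely that $F$ and $G$ share $1$ IM. Moreover, because each $\alpha_j$ is a simple zero of $P$, one has the exact identity $\ol N(r,1;F)=\sum_{j=1}^{n}\ol N(r,\alpha_j;f)$, and similarly for $G$.

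Next I would exclude a rational $f$. If $f$ were rational, then by Lemma \ref{l2.2} we would have $T(r,F)=nT(r,f)+S(r,f)=O(\log r)$; applying the second fundamental theorem to the transcendental function $\mathcal{L}$ with the targets $\alpha_1,\dots,\alpha_n,\infty$ (a pole of $\mathcal{L}$ occurs only at $s=1$, so $\ol N(r,\infty;\mathcal{L})=O(\log r)$) and using that $F$, $G$ share $1$ IM gives
\[
(n-1)T(r,\mathcal{L})\le \ol N(r,1;G)+S(r,\mathcal{L})=\ol N(r,1;F)+S(r,\mathcal{L})\le T(r,F)+O(1)+S(r,\mathcal{L})=O(\log r),
\]
contradicting the transcendence of $\mathcal{L}$. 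Thus $f$ is transcendental with finitely many poles, so $\ol N(r,\infty;f)=S(r,f)$ and likewise $\ol N(r,\infty;\mathcal{L})=S(r,\mathcal{L})$. The second fundamental theorem for $f$ with targets $\alpha_1,\dots,\alpha_n,\infty$ then yields $(n-1)T(r,f)\le \ol N(r,1;F)+S(r,f)$, and since $\ol N(r,1;F)=\ol N(r,1;G)\le N(r,1;G)\le T(r,G)+O(1)=nT(r,\mathcal{L})+S(r,\mathcal{L})$ by Lemma \ref{l2.2}, we get $(n-1)T(r,f)\le nT(r,\mathcal{L})+S(r,f)+S(r,\mathcal{L})$. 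As $n\ge 3$, absorbing $S(r,f)$ on the left shows $T(r,f)=O(T(r,\mathcal{L}))$ off an exceptional set, so $\rho(f)\le\rho(\mathcal{L})$; the symmetric estimate (swapping the roles of $f$ and $\mathcal{L}$) gives $\rho(\mathcal{L})\le\rho(f)$, hence $\rho(f)=\rho(\mathcal{L})$. Finally, it is classical that every function in the Selberg class has order of growth exactly $1$ (a consequence of the functional equation together with Stirling's formula; see \cite{Steuding-Sprin-07}), and therefore $\rho(f)=\rho(\mathcal{L})=1$.

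I expect the only delicate points to be the joint handling of the two small-term families $S(r,f)$ and $S(r,\mathcal{L})$ — resolved once a clean linear inequality between $T(r,f)$ and $T(r,\mathcal{L})$ is in hand — and the exclusion of a rational $f$, which is the one step that is genuinely necessary rather than cosmetic, because the conclusion $\rho(f)=1$ does not survive if $f$ is allowed to be rational.
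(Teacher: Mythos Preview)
Your argument is correct and is precisely the standard route: pass from the set sharing to $F$, $G$ sharing $1$ IM, use the second fundamental theorem with the $n$ simple roots of $P$ to sandwich $T(r,f)$ and $T(r,\mathcal{L})$ between constant multiples of one another, and then invoke the order-one growth of Selberg-class $L$-functions. This is exactly the procedure the paper defers to (it merely cites Theorem~5 of \cite{Yan-Li-Yi_Lith} rather than writing out the details), so your proof is essentially the same as the paper's, only spelled out in full.

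One cosmetic remark: in your exclusion of a rational $f$ the line $\ol N(r,1;F)\le T(r,F)+O(1)$ presupposes $F\not\equiv 1$, i.e.\ that $f$ is not identically equal to some $\alpha_j\in S$; that degenerate constant case is ruled out in one line (it would force $\mathcal{L}$ to take values only in $S$, impossible for a non-constant meromorphic function), but you may want to say so explicitly.
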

\begin{proof}
	Adopting the same procedure as done in Theorem 5, \{p. 6, \cite{Yan-Li-Yi_Lith}\} we can easily obtain $\rho(f)=\rho(\mathcal{L})=1$.
\end{proof}
\begin{lem}\label{l2.12}\cite{Lin-Lin-filomat}
	If $\mathcal{L}$ is a non-constant $L$-function, then there is no generalized Picard	exceptional value of $\mathcal{L}$ in the complex plane.
\end{lem}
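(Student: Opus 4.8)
The plan is to show that \emph{every} finite complex value is assumed by $\mathcal{L}$ with the maximal possible frequency in the Nevanlinna sense, which is incompatible with being exceptional under any of the usual interpretations of a generalized Picard exceptional value (deficient value, value with $\ol N(r,a;\mathcal{L})=S(r,\mathcal{L})$, or value whose $a$-points have exponent of convergence below $\rho(\mathcal{L})$). I would work with the interpretation that $a\in\mathbb{C}$ is a generalized Picard exceptional value of $\mathcal{L}$ when $\ol N(r,a;\mathcal{L})=S(r,\mathcal{L})$, and note at the end that the argument is indifferent to the precise form chosen.

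First I would record two growth facts for an $L$-function $\mathcal{L}$ of degree $d_{\mathcal{L}}>0$. By Steuding's estimate \cite{Steuding-Sprin-07} one has $T(r,\mathcal{L})=\frac{d_{\mathcal{L}}}{\pi}\,r\log r+O(r)$, so in particular $\rho(\mathcal{L})=1$ and any term whose growth order is strictly below $r\log r$ is absorbed in $S(r,\mathcal{L})$. Secondly, since $(s-1)^{k}\mathcal{L}(s)$ is entire by axiom (ii), the function $\mathcal{L}$ has at most one pole (at $s=1$), whence $N(r,\infty;\mathcal{L})=O(\log r)=S(r,\mathcal{L})$; thus $\infty$ is \emph{already} exceptional for $\mathcal{L}$, and the content of the lemma is precisely that no \emph{finite} value can behave in this way.

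The decisive ingredient is the value-distribution (Riemann--von Mangoldt type) count for the $a$-points of $\mathcal{L}$: for every fixed $a\in\mathbb{C}$ the number of solutions of $\mathcal{L}(s)=a$ with $|\operatorname{Im}s|\le T$ is $(1+o(1))\,\frac{d_{\mathcal{L}}}{2\pi}\,T\log T$. I would derive this by applying the argument principle to $\mathcal{L}'/(\mathcal{L}-a)$ along the boundary of a rectangle in the critical strip, estimating the horizontal contributions through the functional equation of axiom (iii) together with Stirling's formula for the product of gamma factors, and using that $\mathcal{L}(s)$ tends to the constant $a(1)$ as $\operatorname{Re}s\to+\infty$ to confine the $a$-points (for $a\ne a(1)$, and after deleting a bounded region when $a=a(1)$) to a fixed vertical strip. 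Since the $a$-points then lie in a strip, $|s|$ and $|\operatorname{Im}s|$ are comparable there, so translating this count into Nevanlinna disc-counting gives $N(r,a;\mathcal{L})=(1+o(1))\,\frac{d_{\mathcal{L}}}{\pi}\,r\log r=(1+o(1))\,T(r,\mathcal{L})$ for every finite $a$.

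Finally I would assemble the contradiction. The matching of the two asymptotics yields $\delta(a,\mathcal{L})=0$ for all finite $a$, and in fact $N(r,a;\mathcal{L})=(1+o(1))T(r,\mathcal{L})$, which already excludes $N(r,a;\mathcal{L})=S(r,\mathcal{L})$. To pass from $N$ to the reduced $\ol N$ I would observe that multiple $a$-points are the common zeros of $\mathcal{L}-a$ and $\mathcal{L}'$ and form a set of lower order than the full $a$-point count (equivalently the ramification contribution is $S(r,\mathcal{L})$), so $\ol N(r,a;\mathcal{L})=(1+o(1))T(r,\mathcal{L})$ as well; hence $\ol N(r,a;\mathcal{L})\ne S(r,\mathcal{L})$ and $a$ is not a generalized Picard exceptional value. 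The very same asymptotic shows that the exponent of convergence of the $a$-points equals $\rho(\mathcal{L})=1$, so the conclusion is unchanged under the order-theoretic definition. I expect the main obstacle to be exactly the $a$-point count of the third paragraph: it is the one genuinely analytic-number-theoretic step, requiring the functional equation and sharp growth bounds for $\mathcal{L}$ on vertical lines rather than soft Nevanlinna theory. Indeed no purely Nevanlinna argument can suffice, since an order-one function such as $e^{z}$ (with $\delta(\infty)=\delta(0)=1$) \emph{does} possess a finite generalized Picard exceptional value, so the special arithmetic structure of $\mathcal{L}$ must be invoked.
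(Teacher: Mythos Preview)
The paper does not supply its own proof of this lemma: it is quoted verbatim from Lin--Lin \cite{Lin-Lin-filomat} and used as a black box. So there is no ``paper's proof'' to compare against; your sketch is, in effect, a reconstruction of what the cited reference does.

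That said, your approach is the standard one and is essentially what underlies the result in the literature: Steuding's asymptotic $T(r,\mathcal{L})=\frac{d_{\mathcal{L}}}{\pi}r\log r+O(r)$ together with the Riemann--von Mangoldt type count for $a$-points forces $N(r,a;\mathcal{L})\sim T(r,\mathcal{L})$ for every finite $a$, ruling out any exceptional behavior. Two small remarks. First, in the present paper the lemma is only ever applied in the classical sense (a value taken at most finitely often, i.e.\ $\ol N(r,a;\mathcal{L})=O(\log r)$), so your more refined discussion of the passage from $N$ to $\ol N$ is not strictly needed here; the statement $N(r,a;\mathcal{L})\sim T(r,\mathcal{L})$ already gives the required contradiction. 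Second, your tacit assumption $d_{\mathcal{L}}>0$ is harmless, since the only degree-zero element of the Selberg class is the constant function $1$, excluded by hypothesis. The one place your write-up is a little loose is the claim that multiple $a$-points ``form a set of lower order''; this is true but you should justify it, e.g.\ via $N(r,a;\mathcal{L})-\ol N(r,a;\mathcal{L})\le N(r,0;\mathcal{L}')\le T(r,\mathcal{L}')+O(1)$ and the lemma of the logarithmic derivative, or simply avoid it as noted above.
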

\section{Proof of the theorems}

\begin{proof}[Proof of Theorem \ref{t1.1}]Let us consider \beas F=\frac{f^{n-2m}(f^{2m}+af^{m}+b)}{-c},\;\;\;  G=\frac{\mathcal{L}^{n-2m}(\mathcal{L}^{2m}+a\mathcal{L}^{m}+b)}{-c}. \eeas
Clearly $F$ and $G$ share $(1,0)$. Since $f$ has finitely many poles and $\mathcal{L}$ has at most one pole then $\ol N(r,\infty;f)=\ol N(r,\infty;\mathcal{L})=O(\log r)$. Also from {\it{Lemma \ref{l2.11}}} we have $\rho(f)=\rho(\mathcal{L})=1$. Therefore it is obvious that, $S(r,f)=S(r,\mathcal{L})=O(\log r)$. 	
	\par Now from {\it{Lemmas \ref{l2.1}}}, {\it\ref{l2.2}} {\it{\ref{l2.5}}}, {\it{\ref{l2.6}}} and putting $s=0$ in {\it{Lemma \ref{l2.3}}} and by the second fundamental theorem we have \beas\;\;\;\;(n+m)(T(r,f)+T(r,\mathcal{L}))&\leq& \ol N(r,1;F)+\ol N(r,1;G)+\sum_{i=0}^{m}\ol N(r,c_i;f)+\sum_{i=0}^{m}\ol N(r,c_i;\mathcal{L})\\\nonumber&&+\ol N(r,0;f)+\ol N(r,0;\mathcal{L})+\ol N(r,\infty;f)+\ol N(r,\infty;g)\\\nonumber&&-N_0(r,0;f')-N_0(r,0;\mathcal{L}')+S(r,f)+S(r,\mathcal{L}).\nonumber\eeas i.e., \bea \label{e3.1}\frac{n}{2}\left(T(r,f)+T(r,\mathcal{L})\right)	&\leq& \ol N(r,0;f)+\ol N(r,0;\mathcal{L})+\left(\frac{3}{2}+\frac{\chi_n}{2}\right)(\ol N_{L}(r,1;F)+\ol N_{L}(r,1;G))\\\nonumber&&+O(\log r)\\\nonumber&\leq& T(r)+\left(\frac{3}{2}+\frac{1}{2}\right)(\ol N_{L}(r,1;F)+\ol N_{L}(r,1;G))+O(\log r), \eea 	where $T(r)=T(r,f)+T(r,\mathcal{L})$.
	\par Clearly when 
	$n\geq 7$ in view of {\it{Lemma \ref{l2.9}}}, from (\ref{e3.1}) we get a contradiction.
	\par Therefore $H\equiv 0$ and so integrating both sides we get, \bea\label{e3.2}\frac{1}{G-1}=\frac{A}{F-1}+B,\eea where $A\not=0$, $B$ are two constants. From {\it{Lemma \ref{l2.2}}} and (\ref{e3.2}) we have, \bea\label{e3.3} T(r,\mathcal{L})=T(r,f)+O(1).\eea

We omit the rest of the proof of this theorem as it can be carried out in the line of proof of {\it{Theorem 1.1}} for $H\equiv0$ \cite{Ban-Mallick-bohemica}.	 
  \end{proof}
\begin{proof}[Proof of Theorem 1.2]
	Let $F$ and $G$ be given as in the proof of {\it{Theorem \ref{t1.1}}}. Since $E_{f}(S,s)=E_{g}(S,s)$ then clearly $F$ and $G$ share $(1,s)$. Also it is given that $E_{f}(\{\alpha\},0)=E_{\mathcal{L}}(\{\alpha\},0)$ where $\alpha\in S'$. Next we consider the following cases.
	\\{\bf\underline{Case-I.}}
	Let us take $\alpha=0$. Considering $H\not\equiv 0$ and using the same argument as in {\it{Lemma \ref{l2.4}}} we get \beas  N(r,\infty;H)&\leq& \ol N_*(r,0;f,\mathcal{L})+\ol N\left(r,0;f^{m}+\frac{a(n-m)}{2n}\right)+\ol N\left(r,0;\mathcal{L}^{m}+\frac{a(n-m)}{2n}\right)\\& &+\ol N(r,\infty;f)+\ol N(r,\infty;\mathcal{L})+\ol N_{*}(r,1;F,G)+\ol N_{0}(r,0;f^{'})+\ol N_{0}(r,0;\mathcal{L}^{'})\\&&+S(r,f)+S(r,\mathcal{L}).\eeas  Now proceeding same as in (\ref{e3.1})
	  we have \bea\label{e3.4} \frac{n}{2}T(r)\leq mT(r)+3\ol N(r,0;f)+\left(\frac{3}{2}-s\right)\ol N_*(r,1;F,G)+O(\log r). \eea
	  Next in view of {\it Definition 1.6}, using {\it{Lemma \ref{l2.10}}} in (\ref{e3.4}) we get \bea\label{e3.5}\frac{n}{2}T(r)\leq mT(r)+\left(\frac{3}{2}-s+\frac{3}{n-2m-1}\right)\ol N_*(r,1;F,G)+O(\log r).\eea
	  
	  \par Clearly when 
	  \beas&(i)& s\geq2,\; n\geq 2m+2 \;\;or\;\; when\\&(ii)&  s=1,\;\; n\geq 2m+3\;\; or\;\; when \\&(iii)&  s=0,\;\; n\geq 2m+5,\eeas using {\it{Lemma \ref{l2.9}}}, from (\ref{e3.5}) we get a contradiction.
	  \par Therefore $H\equiv 0$. Integrating both sides we get (\ref{e3.2}) and so from {\it{Lemma \ref{l2.2}}} we again have (\ref{e3.3}).
	 \\ {\bf\underline{Case-I-1.}} Suppose $B\not=0$. Then from (\ref{e3.2}) we get \be\label{e3.7} G-1\equiv \frac{F-1}{BF+A-B}.\ee 
	  {\bf\underline{Subcase-I-1.1}}
	  If $A-B\not =0$, then noting that $\frac{B-A}{B}\not =0,1,\infty$; from (\ref{e3.1}) we get 
	  $$\ol N(r,\frac{B-A}{B};F)=\ol N(r,\infty;G).$$
	  Therefore in view of {\it Lemma \ref{l2.3}} and (\ref{e3.3}) the second fundamental theorem yields
	  \beas nT(r,f)&\leq& \ol N(r,0;F)+\ol N(r,\infty;F)+\ol N(r,\frac{B-A}{B};
	  F)+S(r,F)\\&\leq& (2m+1)T(r,f)+\ol N(r,\infty;f)+\ol N(r,\infty;\mathcal{L})+S(r,f)\\&\leq&(2m+1)T(r,f)+O(\log r),\eeas which is a contradiction for $n\geq2m+2$.
	  \\{\bf\underline{Subcase-I-1.2.}} If $A-B=0$, then from (\ref{e3.7}) we have \be\label{e3.8}G-1=\frac{F-1}{BF}.\ee  (\ref{e3.8}) implies that $0$'s of $f$ and (${f}^{2m}+a{f}^{m}+b$) contributes to the poles of $G$. Since $\frac{a^{2}}{4b}=\frac{n(n-2m)}{(n-m)^{2}}$; i.e., $a^{2}\not=4b$, it follows that all the zeros of $z^{2m}+az^{m}+b$ are simple. Since $\ol N(r,\infty;G)=\ol N(r,\infty;\mathcal{L})$, $\mathcal{L}$ has at most one pole at $z=1$ and $m\geq 2$, we arrive at a contradiction. When $m=1$, let $\eta_i\;(i=1,2)$ be the zeros of $z^{2}+az+b$ and so the  $\{0,\eta_1,\eta_2\}$ points of $f$ will be the poles of $\mathcal{L}$. First using the second fundamental theorem, it is easy to verify that among these $\{0,\eta_1,\eta_2\}$ points, $f$ can not have two exceptional values, so $f$ may have only one exceptional value which implies $\mathcal{L}$ has more than one pole. Hence we arrive at a contradiction again. 
	  \\{\bf\underline{Case-I-2.}}
	  Suppose $B=0$. Then from (\ref{e3.2}) we get that $$F-1=A(G-1);$$ i.e.,  \be\label{e3.9}f^{n}+af^{n-m}+bf^{n-2m}\equiv A\left( \mathcal{L}^{n}+a\mathcal{L}^{n-m}+b\mathcal{L}^{n-2m}+c\frac{A-1}{A}\right)\ee 
	  and 
	  \be\label{e3.10}f^{n}+af^{n-m}+bf^{n-2m}+c(1-A)\equiv A\left( \mathcal{L}^{n}+a\mathcal{L}^{n-m}+b\mathcal{L}^{n-2m}\right). \ee
	 Since $f$ and $\mathcal{L}$ share $0$ IM and $\mathcal{L}$ has no exceptional value, from (\ref{e3.9}), (\ref{e3.10}) we get $A=1$.
	  \\{\bf\underline{Subcase-I-2.1.}} When $A=1$. Then we get $F\equiv G$; i.e.,\be\label{e3.11}\mathcal{L}^{n-2m}(\mathcal{L}^{2m}+a\mathcal{L}^m+b)\equiv f^{n-2m}(f^{2m}+af^m+b).\ee From (\ref{e3.11}) we have $f$, $\mathcal{L}$ share $0$ and $\infty$ CM. Then clearly $h=\frac{\mathcal{L}}{f}$ has no zero and no pole. Now putting $\mathcal{L}=fh$ in $F\equiv G$ we get
	  \be\label{e3.12}  f^{2m}(h^n-1)+af^m(h^{n-m}-1)+b(h^{n-2m}-1)=0.\ee 
	  \\{\bf\underline{Subcase-I-2.1.1.}}
	  If $h$ is constant, then as $f$ is non-constant so, $h^n=h^{n-m}=h^{n-2m}=1$. Since $\gcd(m,n)=1$, so $h=1$. 
	  Therefore $f\equiv \mathcal{L}$.
	  \\{\bf\underline{Subcase-I-2.1.2.}}	
	  If $h$ is non-constant, 
	  then from (\ref{e3.12}), in view of {\it{Lemma \ref{l2.7}}} we get 
	  \be\label{e3.13}\left( f^m
	  +\frac{a}{2}\frac{h^{n-m}-1}{h^n-1}\right)^2=\frac{\phi(h)}{4(h^n-1)^2}=\frac{a^2(h-1)^4(h-\nu_1)(h-\nu_2)\ldots(h-\nu_{2n-2m-4})}{4(h^n-1)^2},\ee where $\nu_i$'s are the distinct simple zeros of $\phi(h)$ and each   $\nu_i$ points of $h$ are of multiplicities at least $2$. Therefore by the second fundamental theorem we get 
	  \beas (2n-2m-4)T(r,h)&\leq& \sum\limits_{i=1}^{2n-2m-4}\ol N(r,\nu_i;h)+\ol N(r,0;h)+\ol N(r,\infty;h)+S(r,h)\nonumber\\&\leq&(n-m-2)T(r,h)+S(r,h),\eeas which is a contradiction for $n\geq2m+2$.
\\{\bf\underline{Case-II.}}	  Let us consider $\alpha(\not=0)\in S'$.\par Without loss of generality we may assume $\alpha=c_m$.
	Considering $H\not\equiv 0$ and by the same argument as in {\it{Lemma \ref{l2.4}}} we get \beas  N(r,\infty;H)&\leq& \ol N(r,0;f)+\ol N(r,0;\mathcal{L})+\sum_{i=0}^{m-1}\ol N(r,c_{i};f)+\sum_{i=0}^{m-1}\ol N(r,c_{i};\mathcal{L})+\ol N_*{(r,\alpha;f,\mathcal{L})}\\& &+\ol N(r,\infty;f)+\ol N(r,\infty;\mathcal{L})+\ol N_{*}(r,1;F,G)+\ol N_{0}(r,0;f^{'})+\ol N_{0}(r,0;\mathcal{L}^{'})\\&&+O(\log r).\eeas
	Now proceeding same as in (\ref{e3.1})
	we have \bea\label{e3.14}\;\;\;\;\;\; \frac{n}{2}T(r)&\leq&(m-1)T(r)+2(\ol N(r,0;f)+\ol N(r,0;\mathcal{L}))+\left(\frac{3}{2}-s\right)(\ol N_L(r,1;F)+\ol N_L(r,1;G))\\\nonumber&&+\ol N_*(r,\alpha;f,\mathcal{L})+O(\log r). \eea
	Now using {\it{Lemma \ref{l2.9}}} in (\ref{e3.14}) we get 
	\bea\label{e3.15}\;\; \frac{n}{2}T(r)&\leq&(m+1)T(r)+\frac{3-2s}{2(s+1)}\left(\ol N(r,0;f)+\ol N(r,0;\mathcal{L})\right)+\ol N(r,\alpha;f)+O(\log r). \eea
	\par Clearly when 
	\beas&(i)& s\geq1,\; n\geq 2m+4 \;\;or\;\; when\\&(iii)&  s=0,\;\; n\geq 2m+7; \eeas  from (\ref{e3.15}) we get a contradiction.
	\par Therefore $H\equiv 0$ and so integration again yields (\ref{e3.2}).
	\\{\bf\underline{Case-II-1.}} Suppose $B\not=0$. Then we again get (\ref{e3.7}).
	  So we have $$\ol N(r,\frac{B-A}{B};F)=\ol N(r,\infty;G),$$ where $A,\;A-B\not=0$.
	Now we consider the following sub cases: \\{\bf\underline{Subcase-II-1.1}} Suppose that $\frac{B-A}{B}=\frac{\beta_m}{c}$ where $\alpha=c_m$. Since $\frac{a^2}{4b}=\frac{n(n-2m)}{(n-m)^{2}}$, then we have  \be\label{e3.16}F^{'}=n\frac{f^{n-2m-1}\left(\prod\limits_{i=1}^{m}(f-c_i)\right)^2}{-c}f^{'}.\ee   Again $\frac{a^2}{4b}=\frac{n(n-2m)}{(n-m)^{2}}\neq1$ implies $a^2\neq4b$. Therefore by {\it Lemma \ref{l2.8}} we get  $\beta_m\neq0$ and $P(z)$ is critically injective. Since any critically injective polynomial can have at most one multiple zero, it follows that
	 	\be\label{e3.17} f^{n}+af^{n-m}+bf^{n-2m}+\beta_m=({f}-c_m)^3\prod\limits_{j=1}^{n-3}({f}-\xi_j),\ee where $\xi_j$'s are $(n-3)$ distinct zeros of $z^{n}+az^{n-m}+bz^{n-2m}+\beta_m$ such that $\xi_j\neq c_m,0$, $j=1,2,\ldots,n-3$. Then from (\ref{e3.7}) and (\ref{e3.17}) we have
	 		 \be\label{e3.18} B(G-1)\equiv \frac{-c(F-1)}{(f-c_m)^3\prod\limits_{j=1}^{n-3}(f-\xi_j)}.\ee Since $E_{f}(\{c_m\},0)=E_{g}(\{c_m\},0)$, so $c_m$ points of $f$ are not poles of $G$ and hence $c_m$ is an e.v.P. of $f$ and hence an e.v.P. of $\mathcal{L}$.  Therefore from {\it{Lemma \ref{l2.12}}} we arrive at a contradiction.
	\\{\bf\underline{Subcase-II-1.2}}
	Next suppose $\frac{B-A}{B}\not=\frac{\beta_m}{c}$. Since $A$ and $A-B$ are non zero then adopting the same procedure as done in {\bf{Subcase-I-1.1}} of this theorem again we can get a contradiction.
\\{\bf\underline{Subcase-II-1.3}} If $A-B=0$ then by {\bf{Subcase-I-1.2}} we arrived at a contradiction.
\\{\bf\underline{Subcase-II-2}} Assuming $B=0$ we get $$F-1=A(G-1)$$ and subsequently we can obtain (\ref{e3.9}), (\ref{e3.10}).
\\{\bf\underline{Subcase-II-2.1.}} Let $A\not=1$. Then as	  $c\neq 0$,  so $c\frac{(A-1)}{A}\neq0$ and at the same time by {\it Lemma \ref{l2.8}} we have $\beta_i\neq0$. Therefore we have the following subcases.
\\{\bf\underline{Subcase-II-2.1.1.}} 	  Suppose $c\frac{(A-1)}{A}=\beta_i$ for some $i\in\{1,2,\ldots,m\}$. Then we claim that $c(1-A)\neq\beta_j$ for any $j\in\{1,2,\ldots,m\}$. For if $c(1-A)=\beta_j$; then $A=\frac{c-\beta_j}{c}$ and since it is given that $c\frac{(A-1)}{A}=\beta_i$; i.e., $A=\frac{c}{c-\beta_i}$,
it follows that $\frac{c-\beta_j}{c}=\frac{c}{c-\beta_i}$; i.e., $c=\frac{\beta_i\beta_j}{\beta_i+\beta_j}$, a contradiction. Thus 
$z^{n}+az^{n-m}+bz^{n-2m}+c(1-A)=0$ has only simple roots say $\gamma_i$ for $i=1,2,\ldots,n$. So from (\ref{e3.10}), (\ref{e3.3}) and by using the second fundamental theorem we get \beas (n-1)T(r,f)&\leq& \displaystyle\sum_{i=1}^{n}\ol N(r,\gamma_i;f)+\ol N(r,\infty;f)+S(r,f)\\&\leq&(2m+1)T(r,\mathcal{L})+O(\log r),\eeas gives a contradiction for $n\geq 2m+3$.
 \\{\bf\underline{Subcase-II-2.1.2.}} Suppose $c\frac{(A-1)}{A}\not=\beta_i$ for all $i\in\{1,2,\ldots,m\}$.  So,   $z^{n}+az^{n-m}+bz^{n-2m}+c\frac{(A-1)}{A}=0$ has only simple roots say $\mu_i$ for $i=1,2,\ldots,n$.
Therefore from (\ref{e3.9}), (\ref{e3.3}) and by the second fundamental theorem we have \beas (n-1)T(r,\mathcal{L})&\leq& \displaystyle\sum_{i=1}^{n}\ol N(r,\mu_i;\mathcal{L})+\ol N(r,\infty;\mathcal{L})+S(r,\mathcal{L})\\&\leq&(2m+1)T(r,f)+O(\log r),\eeas  gives a contradiction for $n\geq 2m+3$.\\
{\bf\underline{Subcase-II-2.2.}} Suppose $A=1$. Then we get $F=G$ and hence we obtain (\ref{e3.11}). Putting $\mathcal{L}=fh$ in (\ref{e3.11}) we get (\ref{e3.12}). 
	\par Now proceeding the same way as done in {\bf{Subcase-I-2.1.1}}-{\bf{Case-I-2.1.2}}  of this theorem, we will get $f\equiv \mathcal{L}$, for $n\geq 2m+4$.
	
\end{proof}

\end{document}